\theoremstyle{plain}
\newtheorem{thm}{Theorem}[section]
\newtheorem{prop}[thm]{Proposition}
\newtheorem{lem}[thm]{Lemma}
\newtheorem{cor}[thm]{Corollary}
\theoremstyle{definition}
\newtheorem{rmk}[thm]{\textup{Remark}} %\textup for ``Remark'' is required
\newtheorem{examples}[thm]{\textit{Examples}} %\textit for ``Example'' is required
\newtheorem{question}[thm]{Question}
\newtheorem*{acknowledgement}{\textup{Acknowledgements}}
\numberwithin{equation}{section}
\newcommand{\ii}{\mathrm{i}}    %%complex unit
\newcommand{\NN}{\mathbb{N}}    %%natural numbers
\newcommand{\ZZ}{\mathbb{Z}}    %%integers
\newcommand{\QQ}{\mathbb{Q}}    %%rational numbers
\newcommand{\RR}{\mathbb{R}}    %%real numbers
\begin{document}

  \title[Further insights into the mysteries of the values of zeta functions at integers]{Further insights into the mysteries of the values of zeta functions at integers}

\author[J\'an Min\'a\v{c} \and Tung T. Nguyen \and Nguy$\tilde{\text{\^{e}}}$n Duy T\^{a}n]%
{ J\'an Min\'a\v{c}* \and Tung T. Nguyen** \and Nguy$\tilde{\text{\^{e}}}$n Duy T\^{a}n***}

\newcommand{\acr}{\newline\indent}

\address{\llap{*\,}Department of Mathematics\acr
                   Western University\acr
                    London, Ontario N6A 5B7\acr
                   CANADA}
\email{minac@uwo.ca}

\address{\llap{**\,}Brain and Mind Institute and Department of Mathematics\acr
                   Western University\acr
                    London, Ontario N6A 5B7\acr
                   CANADA}
\email{tungnt@uchicago.edu}

\address{\llap{***\,}School of Applied  Mathematics and 	Informatics\acr
                    Hanoi University of Science and Technology\acr
                    01 Dai Co Viet Road, Hanoi\acr
                    VIETNAM}
\email{tan.nguyenduy@hust.edu.vn}

%%\acr is not required (if you do not need to see a column);
%%in our style \\ makes a column automatically

\thanks{J.M. is partially supported  by the Natural Sciences and Engineering Research Council of Canada (NSERC) grant R0370A01. He gratefully acknowledges the Western University Faculty of Science Distinguished Professorship in 2020-2021 and support of Western Academy for Advanced Research as Western Fellow. N.D.T. is partially supported  by the Ministry of Education and Training of Vietnam via the project "On the arithmetic of algebraic groups and homogeneous spaces over local and global fields and their extensions"}

\subjclass[2010]{Primary 11M35, 11M06, 11B68} %Secondary is optional
\keywords{Hurwitz zeta functions, Bernoulli polynomials, Generalized Bernoulli numbers, $L$-functions}

\begin{abstract}
We present a remarkably simple and surprisingly natural interpretation of the values of zeta functions at negative integers and zero. Namely we are able to relate these values to areas related to partial sums of powers. We apply these results to further interpretations of values of $L$-functions at negative integers. We hint in a very brief way at some expected connections of this work with other current efforts to understand the mysteries of the values of zeta functions at integers. 
\end{abstract}

\maketitle
\begin{center} {\small \it To the memory of Goro Shimura with gratitude and admiration}
\end{center}

 \section{Introduction}
Ever since Euler made stunning discoveries connecting some values of zeta functions with powers of $\pi$, there has been a tremendous effort of mathematicians to comprehend well the ``true reasons behind these connections" and to further extend these results to other values and other zeta functions. This work belongs to one of the most dramatic and exciting parts of mathematics and some parts of theoretical physics. Some extraordinary progress, conjectures, and various deep concepts have been obtained. Yet, the air of tremendous mystery, excitement, and strong desire to make further progress has never been altered nor slowed down.

Let us begin with two very naive questions which a child might ask: 
\begin{enumerate}
\item How many natural numbers are there?

\item What is the sum of all natural numbers?

\end{enumerate}

The obvious answers to both questions namely: ``infinity" are correct but they lack a certain depth, subtlety, and connections with combinatorial properties of natural numbers. L. Euler was able to offer some more subtle and more interesting answers: 

\begin{enumerate}
\item $1+1+1+ \cdots =-\dfrac{1}{2}.$
\item $1+2+3+ \cdots= -\dfrac{1}{12}.$
\end{enumerate}

One has to be cautious when handling these beautiful Euler's equalities in a formal way. As we shall see below that equalities can be explained in a subtle way as values of Rieman's zeta function. However, in order to do it properly, the Riemann zeta function defined below as a convergent series for complex values $s = \sigma + \ii t$ for $\sigma > 1$ has to be analytically extended to the entire complex plane except when $s = 1$. 
Therefore one can not formally handle the equalities $(1)$ and $(2)$ in the usual way as when we consider just finite sums. For example if we just add formally both equalities we shall obtain equality: 
\[
2+ 3 + 4 +\cdots = -\dfrac{7}{12}. 
\]
However if we just subtract from the equation $(2)$ the number one we obtain 
\[
2+3 + 4+\cdots = -\dfrac{13}{12}\]  which would contradict the first equality.

Therefore below we consider rigorous proper modern interpretation of Euler's equalities using values of Riemann zeta function.

We now write the above-mentioned equalities of Euler using the values of the Riemann zeta function as: 
\begin{enumerate}
\item $\zeta(0)=-\dfrac{1}{2}.$
\item $\zeta(-1)=-\dfrac{1}{12}.$
\end{enumerate}
In fact, S. Ramanujan rediscovered these statements and for example  formula $(2)$ appeared in exactly the same form in Ramanujan's letter to G. Hardy addressed in 16 January, 1913 (see \cite[page 351]{[R]}).

Recall that the Riemann zeta function $\zeta(s)$ with $s=\sigma+ i t$ is the analytical continuation of the function 
\[ f(s)= \sum_{n=1}^{\infty} \frac{1}{n^s}, \]
with $\sigma>1$ to the whole complex plane except when $s=1$ where $\zeta(s)$ has a simple pole (see \cite{[Titchman]}).  Euler's answers are deep and the assigned numbers as  a summation of these divergent series have various interesting interpretations. For interested readers, we just refer to some articles and surveys related to these topics (see for examples  \cite{[DM]}, \cite{[Hardy]}, \cite{[Kato2]}, \cite{[zeta]}, \cite{[Kirsten]}, \cite{[LMW]}, \cite{[Rao]}).

In paper \cite{[Minac]}, Min\'a\v{c} raised and answered the following naive question:

\begin{question}
Suppose $a \in \mathbb{N}$ or $a=0$. Are there any connections between the values $\zeta(-a)$ and the partial sums 
\[ S_a(M)=\sum_{n=1}^{M-1} n^a ?\] 
\end{question} 
Here $S_a(M)$ is considered as a polynomial in $M$. 

The answer is the fact on page 1 in \cite{[Minac]} 
\[ \zeta(-a)=\int_{0}^1 S_a(x)dx .\] 

Hence it is observed that 
\[ \zeta(0)=\int_{0}^{1} (x-1)dx=-\frac{1}{2} .\] 
Thus $\zeta(0)$ is equal to $(-1)$ times the area of the right triangle with  acute angles $45^{o}$ and legs of length $1$. This is because the graph of $x-1$ over the interval $[0,1]$ has the shape of a right triangle when taken together with the x-axis and y-axis  (see the displayed Figure 1 below).
%This has a delightful little interpretation as minus are of the right triangle with sides $1 \times 1$ as the graph of $x-1$ in the interval $[0,1]$ is 
\begin{figure}[!htb]
  %\centering
\includegraphics[width=0.3 \textwidth]{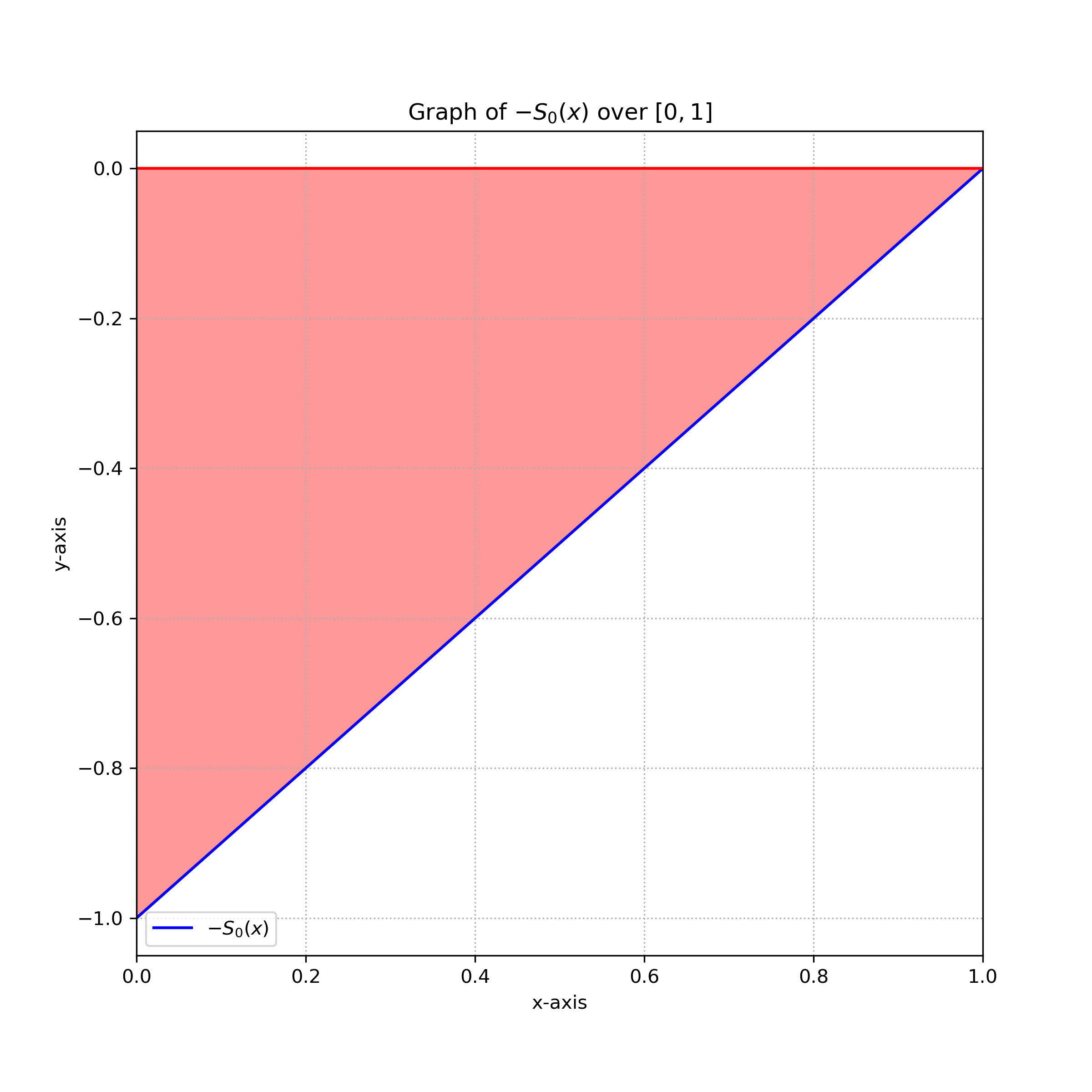} 
\caption{Graph of $x-1$ with $x \in [0,1]$}
\end{figure}

Similarly $\zeta(-1)$ is $(-1)$ time  area of the red region in figure 2 below. 
\[ \zeta(-1)=\int_{0}^1 \frac{x(x-1)}{2}dx=-\frac{1}{12} .\]

\begin{figure}[h!]
  \centering
\includegraphics[width=0.6\textwidth]{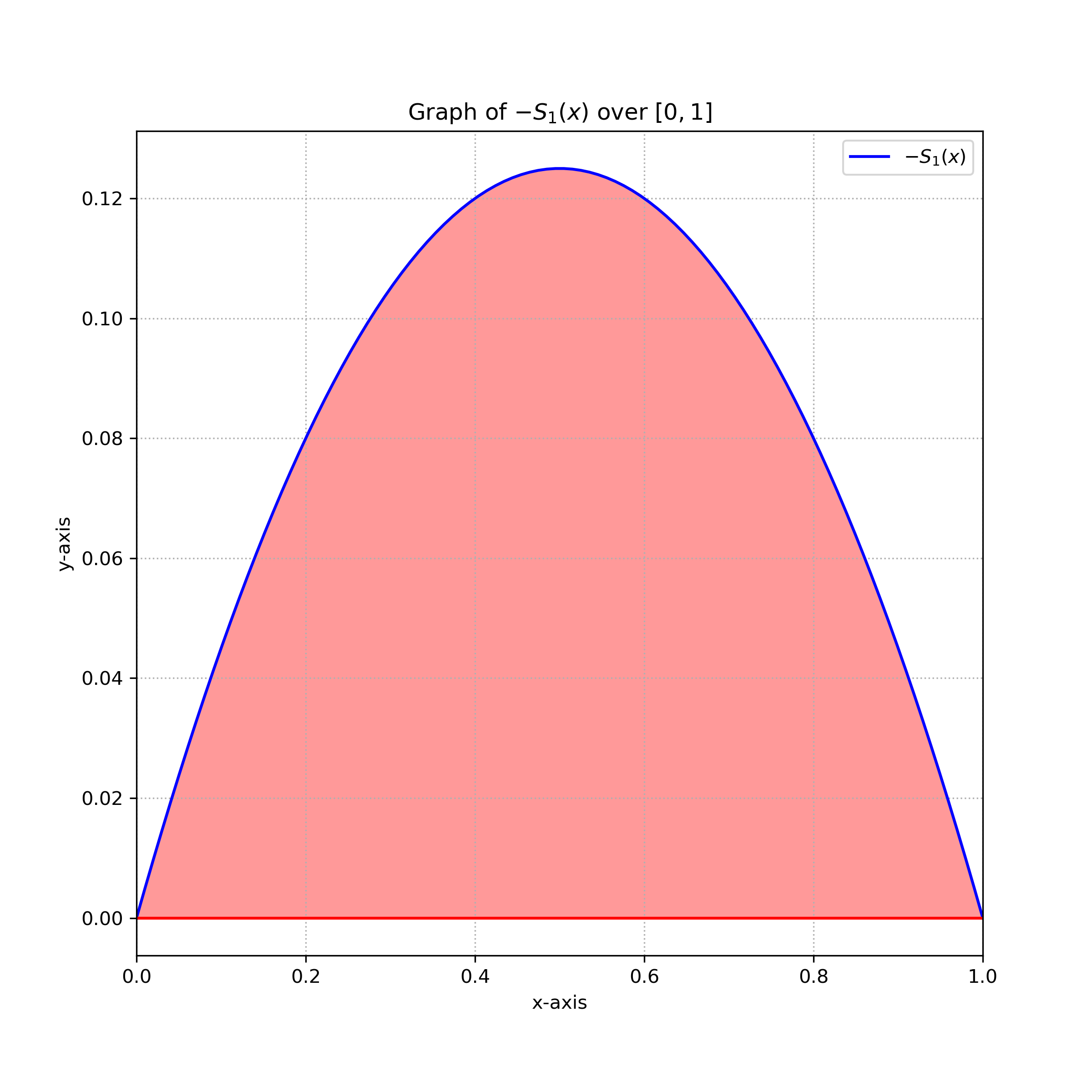} 
\caption{Graph of $\frac{x-x^2}{2}$ with $x \in [0,1]$}
\end{figure}

Observe also that 
\[ \zeta(-2)=\int_{0}^{1} \frac{x(x-1)(2x-1)}{6} dx = 0 .\] 

This fact also has a geometrical meaning as if 
\[ f(x) :=\frac{x(x-1)(2x-1)}{6}, \]
then  
\[ f(1-x)=-\frac{x(x-1)(2x-1)}{6}=-f(x) .\] 

In fact, the graph displayed function of $f(x)$ in the interval $[0,1]$ in  figure $3$, the red region bounded by the horizontal axis $x \in [0, \frac{1}{2}]$ and $f(x)$, the blue region bounded by the horizontal axis $x \in [\frac{1}{2}, 1]$ and $f(x)$ show that in  the evaluation of the integral $\int_{0}^{1} f(x)dx$, the two areas considered with proper signs will cancel each other. 

\begin{figure}[h!]
  \centering
\includegraphics[width=0.8\textwidth]{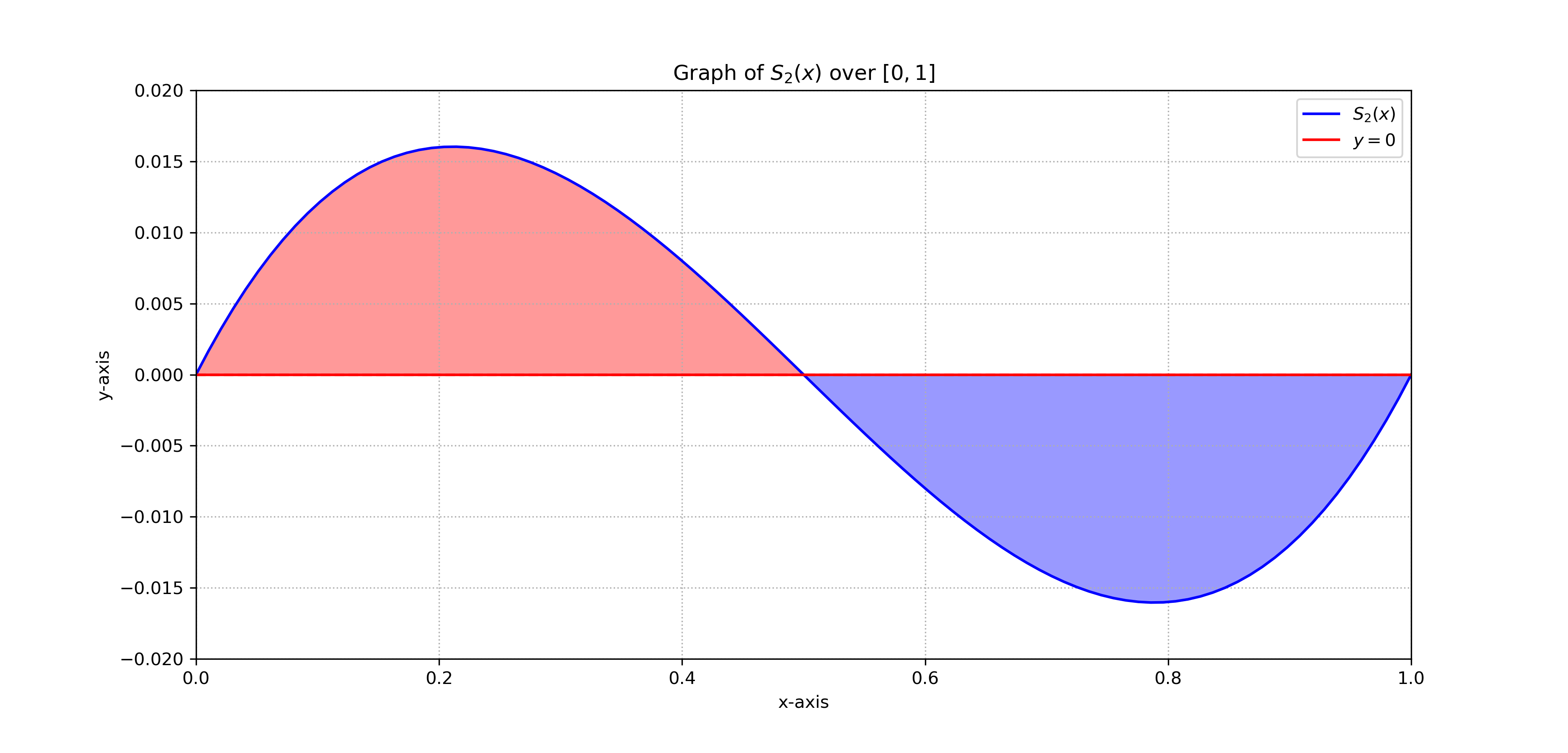} 
\caption{Graph of $\frac{x(x-1)(2x-1)}{6}$ with $x \in [0,1]$}
\end{figure}

In fact, in general for all $a \in \mathbb{N}$ 
\[ \int_{0}^{1} S_a(x)dx=\int_{0}^{1} \frac{B_{a+1}(x)-B_a(x)}{a+1}dx ,\] 

where $B_{a+1}(x)$ is the $(a+1)$-th Bernoulli polynomial (see \cite[Section 2, Chapter 1]{[Rademacher]}). Hence if $a=2n$ and $n \in \mathbb{N}$, we see that 
\[ B_{2n+1}=B_{2n+1}(1)=B_{2n+1}(0)=B_{2n+1} \left(\frac{1}{2} \right)=0 .\]
From the general property 
\[ B_{m}(1-x)=(-1)^m B_m(x) ,\] 
we see immediately  in general, in the evaluation 
\[ \zeta(-2n)=\int_{0}^{1} \frac{B_{2n+1}(x)-B_{2n+1}}{2n+1} dx ,\] 
we obtain the cancellation of two signed areas as in the special case with $\zeta(-2)$. Hence we see a nice geometric explanation $\zeta(-2n)=0$ for all $n \in \mathbb{N}$.

Because $B_{2n}(x)-B_{2n}$ does not change sign between $0$ and $1$ (see \cite[Section 3, Chapter 1]{[Rademacher]}), we also see that all values $\zeta(-1), \zeta(-3), \zeta(-5), \ldots$ are in fact signed areas of regions by the horizontal axis $x \in [0,1]$ and functions $\dfrac{B_2(x)-B_2}{2}, \dfrac{B_4(x)-B_4}{4}, \dfrac{B_{6}(x)-B_6}{6}, \cdots$ These values are also equal to $-\dfrac{B_2}{2}, -\dfrac{B_{4}}{4}, -\dfrac{B_6}{6}, \ldots$ where $B_2, B_4, B_6, \ldots$ are just Bernoulli numbers whose signs alternate.

Interested readers are encouraged to also see articles \cite{[Sury]}, \cite{[ERW]}. This paper is meant as the first paper of a series of papers related to special values of $L$-functions. It is beyond the scope of this paper to provide a comprehensive historical overview of the study of the special values of $L$-functions and their arithmetic significance. Therefore, we merely point out several highlights of these investigations which also motivate our investigations. Some of these investigations are rather sophisticated and they involve the most advanced topics of current number theory. We hope that our elementary point of view and possibly unexpected direct connections with partial sums and signed areas will bring additional insight and inspiration to understand special values of $L$-functions. We also hope to find a bridge between these elementary and more sophisticated studies of $L$-functions in the subsequent papers.

One of the strongest motivations for studying special values of $L$-functions is the class number formula for residue of the Dedekind zeta function $\zeta_K(s)$ of an algebraic number field $K$ expressed via basic invariants of $K$ like the regulator $R_K$, the class number $h_K$, the number of roots of unity $\omega_K$ contained in $K$, discriminant $D_K$,  the number of real 
places $r_1$, and the number of complex places $r_2$. This remarkable formula has the following form 

\[{\displaystyle \lim _{s\to 1}(s-1)\zeta _{K}(s)={\frac {2^{r_{1}}\cdot (2\pi )^{r_{2}}\cdot R_{K}\cdot h_{K}}{w_{K}\cdot {\sqrt {|D_{K}|}}}}} .\] 
See for example \cite[Chapter 5, Section 2.1]{[Borevic]}. This formula provides a bridge between the algebraic and the analytic world.

Observe in particular that $\zeta(s)=\zeta_{\QQ}(s)$. Because $\lim_{s \to 1}(s-1) \zeta(s)=1$ we see that the class number formula in view of $r_1=1, r_2=0, \omega_{\QQ}=2, |D_{\QQ}|=1$ amounts to $h_{\QQ}=1$ and we recover the unique factorization of integers into products of primes. In fact, these strong immediate connections between arithmetic and analysis were already used and exploited by Euler.

In the subsequent work of H. Poincar\'e, L. Mordell, E. Artin, H. Hasse, A. Weil and others, these ideas have been generalized to higher dimensional objects beginning with elliptic curves and studies of their related $L$-functions. These generalizations not only enriched and empowered number theory and its reach but they also provided a new broad perspective on the original investigations of L. Euler on special values of various $L$-functions. Using early computer experimentation, and beautiful elementary heuristic arguments, B. Birch and P. Swinnerton-Dyer proposed their remarkable conjecture relating the rank of a given elliptic curve $E$ over $\QQ$ with the Taylor expansion of $L(E,s)$ at $s=1$. We refer readers to more details of the Birch and Swinnerton-Dyer conjecture and its various refinements and developments to a beautiful short exposition of A. Wiles in \cite{[Wiles]}. There readers will also find important further references which include work of A. Beilinson, S. Bloch, G. Faltings, J. Tate, and others. Here we only want to point out that S. Bloch, A. Beilinson, and P. Deligne were able to relate $L$-functions attached to various algebraic varieties with motivic cohomology. However, their work predicts $L$-values only up to an undetermined rational factor. On the other hand, L. Euler found values of $\zeta(2n)$ for each $n \in \mathbb{N}$

\[ \zeta(2n)=(-1)^{n+1} \frac{(2 \pi)^{2n}}{2 (2n)!} B_{2n} .\] 
This formula has a considerable significance because it not only highlights the important transcendental factor $\pi^{2n}$ but also the very important rational factor of great arithmetic and combinatorial significance. In fact, Euler was able to essentially determine his ``real version of functional equation" of later Riemann's complex valued zeta functions (see \cite{[Landau2]} and \cite{[V]}).

Applying his consideration, Euler was thus able to relate values of $\zeta(2n)$ with values $\zeta(1-2n)$ we considered above. In agreement with Euler considerations, S. Bloch and K. Kato were able to propose their well-known conjecture on special values of $L$-functions (see \cite{[BK]}, \cite{[DM]},  \cite{[Geer]}, \cite{[Kato1]}, \cite{[Kato2]}, \cite{[Kato3]}, \cite{[Kurokawa]}, \cite{[LMW]}, \cite{[TungNguyen]}, \cite{[Rao]}, \cite{[Sujatha]}, \cite{[Zagier1]}, \cite{[Zagier2]}, \cite{[Zagier3]}). Namely S. Bloch and K. Kato proposed a beautiful insightful formula for these corresponding $L$-functions using suitably Tamagawa measure. The details are beyond the scope of this paper and we just refer readers to \cite{[BK]} and subsequent papers.

For us an important motivation for this project where this paper is our first contribution is that these special values of $L$-functions are related to certain volumes and that they have a deep arithmetic and combinatorial meaning.

We have been further influenced by the remarkable Elkies's paper \cite{[Elkies]} and related papers \cite{[Andre]}, \cite{[Andre2]}, \cite{[BKC]}, \cite{[Kalman]}, \cite{[Stanley1]}, and \cite{[Stanley2]} as well as other papers where the elegant and clever calculations of related polytopes is used together with very interesting combinatorial interpretation of some related numbers is used to provide quite stunning insight into some special values of $L$-functions and zeta values in general.

We now discuss very shortly the outline of our paper. In section $2$ we provide they key generalizations of some results mentioned in the Introduction from Riemann zeta values to Hurwitz zeta values. In section 3, we apply these results to get a similar insight into some values of $L$-functions at negative integers. In section $4$, we further generalize our considerations to study values of twisted $L$-functions. Finally, in section 5 we consider a special case of zeta functions namely $L(s, \chi_4)$. This is somewhat complementary insight to inspiring results in \cite{[Elkies]}. We conclude our paper with Appendix A containing some specific examples, formulas, graphs, tables as well as some further considerations and comments on a specific literature.

\section{Hurwitz zeta function}
The Hurwitz zeta function is defined for complex arguments $s$ with ${\rm Re}(s)>1$ and real $a$ with $0<a\leq 1$ by  
\[
\zeta(s,a)=\sum_{n=0}^\infty \dfrac{1}{(n+a)^s}.
\]
This series is absolutely convergent for the given values of $s$ and $a$ and can be extended to a meromorphic function defined for all $s\not=1$. The Riemann zeta function  $\zeta(s)=\zeta(s,1)$.

The Bernoulli polynomials $B_n(x)$ and Bernoulli numbers $B_n$ are defined as
\[
\dfrac{te^{xt}}{e^t-1}=\sum_{n=0}^\infty B_n(x) \dfrac{t^n}{n!}
\]
and 
\[
B_n(x) =\sum_{k=0}^n \binom{n}{k} B_k x^{n-k}.
\]

For every $n\geq 1$,  the Bernoulli polynomial $B_n(x)$ satisfies the difference equation
\[
B_n(x+1)-B_n(x)=nx^{n-1}.
\]
In particular $B_n(1)=B_n(0)$. We also have
\[
B_n^\prime(x)=nB_{n-1}(x). 
\]

Bernoulli numbers can be defined recursively by
\[
B_0=1, B_1=-\dfrac{1}{2}, B_n=\sum_{k=0}^n {n \choose k} B_k, n \geq 2.
\] 

\begin{thm}(See \cite[Theorem 12.13]{[Apostol]}) \label{Hurwitz1} 
For every integer $n\geq 0$, we have
\[
\zeta(-n,a)=-\dfrac{B_{n+1}(a)}{n+1}.
\]
\end{thm}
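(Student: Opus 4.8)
The plan is to prove the identity $\zeta(-n,a) = -B_{n+1}(a)/(n+1)$ for integers $n \geq 0$ by establishing an integral representation (or a Hermite-type formula) for $\zeta(s,a)$, and then comparing the analytic continuation directly at $s = -n$. Concretely, I would start from a contour-integral (Hankel-type) representation of $\zeta(s,a)$: for $\mathrm{Re}(s) > 1$ one has, via the Gamma integral $\Gamma(s)(n+a)^{-s} = \int_0^\infty t^{s-1} e^{-(n+a)t}\,\dd t$ and summing the geometric series $\sum_{n\geq 0} e^{-nt} = (1-e^{-t})^{-1}$,
\[
\Gamma(s)\,\zeta(s,a) = \int_0^\infty \frac{t^{s-1} e^{-at}}{1-e^{-t}}\,\dd t = \int_0^\infty \frac{t^{s-1} e^{(1-a)t}}{e^{t}-1}\,\dd t.
\]
I would then deform this to a loop integral $I(s,a) = \frac{1}{2\pi \ii}\int_{\mathcal{C}} \frac{z^{s-1} e^{(1-a)z}}{e^{z}-1}\,\dd z$ around a Hankel contour $\mathcal{C}$ encircling the positive real axis, which is entire in $s$ and satisfies $\zeta(s,a) = \Gamma(1-s)\, I(s,a)$ after using the reflection formula $\Gamma(s)\Gamma(1-s) = \pi/\sin(\pi s)$; this gives the sought meromorphic continuation.

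The key step is then to evaluate $I(-n,a)$ at the non-positive integers $s = -n$, where $z^{s-1} = z^{-n-1}$ is single-valued, so the loop integral collapses to a residue at $z = 0$:
\[
I(-n,a) = \mathrm{Res}_{z=0} \frac{e^{(1-a)z}}{z^{n+1}(e^{z}-1)}.
\]
Here I invoke the generating-function definition of the Bernoulli polynomials given in the excerpt. Writing $\frac{z e^{(1-a)z}}{e^z - 1} = \sum_{k\geq 0} B_k(1-a)\frac{z^k}{k!}$, the residue picks out the coefficient of $z^n$, namely $B_n(1-a)/n!$. Then $I(-n,a)\cdot \Gamma(1-s)|_{s=-n}$ needs care because $\Gamma(1-s)$ has poles at $s = 1, 2, \dots$ but is finite (equal to $n!$) at $s = -n$; combining, $\zeta(-n,a) = \Gamma(1+n)\cdot B_n(1-a)/n!\cdot(\text{sign/normalization factor})$. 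Finally I would use the symmetry relation $B_n(1-x) = (-1)^n B_n(x)$ (stated in the excerpt) together with the difference relation $B_{n+1}(x+1) - B_{n+1}(x) = (n+1)x^n$ to rewrite the answer in the normalized form $-B_{n+1}(a)/(n+1)$; alternatively one can set up the loop integral with $e^{-az}$ in the numerator from the start (giving $B_n(a)$ directly up to the shift) and reconcile the two via $B_{n+1}(a+1)-B_{n+1}(a)=(n+1)a^n$.

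An alternative, more elementary route avoids contour integration entirely: use the Hermite integral representation
\[
\zeta(s,a) = \frac{a^{-s}}{2} + \frac{a^{1-s}}{s-1} + 2\int_0^\infty \frac{\sin(s\arctan(t/a))}{(a^2+t^2)^{s/2}(e^{2\pi t}-1)}\,\dd t,
\]
which is valid for all $s \neq 1$ and exhibits the analytic continuation explicitly; evaluating at $s = -n$ and expanding, one recovers the Bernoulli polynomial via the classical Euler–Maclaurin interpretation. However, given that the excerpt cites Apostol's Theorem 12.13, the cleanest exposition is simply to reproduce Apostol's contour-integral argument.

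The main obstacle I anticipate is bookkeeping the normalization constants: the factor $\Gamma(1-s)$ versus $1/\Gamma(s)$, the sign coming from orientation of the Hankel contour and from $(-1)$-type factors in $z^{s-1}$ on the two sides of the cut, and the index shift between $B_n$ and $B_{n+1}$. The conceptual content — that the analytic continuation at $s=-n$ is governed by the residue at $0$, hence by a Taylor coefficient of $z/(e^z-1)$ times an exponential, hence by a Bernoulli polynomial — is straightforward; making the constants come out to exactly $-1/(n+1)$ is where care is needed, and checking the cases $n = 0$ (giving $\zeta(0,a) = 1/2 - a = -B_1(a)$) and $n = 1$ against known values is the natural sanity check.
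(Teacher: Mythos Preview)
The paper does not actually give its own proof of this theorem: it is stated with the citation ``See \cite[Theorem 12.13]{[Apostol]}'' and then used as a black box throughout (e.g., in the first proof of Theorem~\ref{thm:special value Hurwitz} and in Section~5). So there is nothing in the paper to compare your argument against; you are supplying a proof where the authors chose only to cite one.

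That said, your proposed route is precisely Apostol's: the Hankel contour representation $\zeta(s,a)=\Gamma(1-s)I(s,a)$ followed by the residue evaluation at $s=-n$, identifying the residue via the generating function for Bernoulli polynomials. One small slip to watch: when you compute $\mathrm{Res}_{z=0}\dfrac{e^{(1-a)z}}{z^{n+1}(e^z-1)}$, the extra $1/z$ coming from the simple pole of $1/(e^z-1)$ means the residue is the coefficient of $z^{n+1}$ in $\dfrac{ze^{(1-a)z}}{e^z-1}$, i.e.\ $B_{n+1}(1-a)/(n+1)!$, not $B_n(1-a)/n!$ as you wrote. This is exactly the kind of index shift you yourself flagged as the main hazard; once you track it (together with the sign from the contour orientation, which in Apostol's convention introduces the minus sign directly rather than via the reflection $B_{n+1}(1-a)=(-1)^{n+1}B_{n+1}(a)$), the formula $-B_{n+1}(a)/(n+1)$ drops out cleanly and your sanity check $\zeta(0,a)=\tfrac12-a=-B_1(a)$ confirms it.
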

For any $a\in (0,1]$, $n\in \NN$ we define $S_{n,a}(x)$ to be the unique polynomial such that for every  integer $M\geq 2$ one has 
\[
S_{n,a}(M)= \sum_{k=0}^{M-2} (k+a)^n.
\]
We also set $S_n(x)= S_{n,1}(x)$, so that $S_n(x)$ is the unique polynomial sastifying that 
\[
S_n(M) =\sum_{k=1}^{M-1} k^n, \text{ for every positive integer $M$}.
\]
Existence of these polynomials follows from Proposition \ref{prop:existence} below. The uniqueness is clear because no two distinct polynomials with complex coefficients can have the same values at infinitely many arguments. Observe that our polynomials $S_{n,a}(x)$ has rational coefficients. 

\begin{prop} \label{prop:existence}
\[
S_{n,a}(x)=\dfrac{B_{n+1}(x+a-1)-B_{n+1}(a)}{n+1}.
\]
\end{prop}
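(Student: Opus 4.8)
The plan is to verify directly that the polynomial $P(x) := \dfrac{B_{n+1}(x+a-1)-B_{n+1}(a)}{n+1}$ takes the prescribed values $P(M) = \sum_{k=0}^{M-2}(k+a)^n$ for every integer $M \geq 2$; by the uniqueness remark already made in the text (a polynomial is determined by its values at infinitely many points), this identifies $P$ with $S_{n,a}$ and simultaneously establishes existence. The engine of the computation is the difference equation $B_{n+1}(y+1) - B_{n+1}(y) = (n+1)y^{n}$ recalled in the excerpt.

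First I would set $y = k + a - 1$ in the difference equation, which gives $B_{n+1}(k+a) - B_{n+1}(k+a-1) = (n+1)(k+a-1)^{n}$ — or, after reindexing $k \mapsto k+1$, the cleaner form $B_{n+1}(k+a) - B_{n+1}(k+a-1) = (n+1)(k+a-1)^n$, arranged so that the right-hand side produces the summand $(k+a)^n$ for the appropriate range of $k$. Then I would sum this identity telescopically over $k$ from $1$ to $M-1$ (equivalently, the terms $(k+a)^n$ for $k = 0, \dots, M-2$), so that all intermediate Bernoulli values cancel and one is left with $B_{n+1}(M+a-1) - B_{n+1}(a) = (n+1)\sum_{k=0}^{M-2}(k+a)^n$. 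Dividing by $n+1$ yields $P(M) = \sum_{k=0}^{M-2}(k+a)^n$, which is exactly the defining property of $S_{n,a}$.

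I would then note that the degenerate edge cases ($M = 2$, where the sum has a single term $(0+a)^n = a^n$, matching $\frac{B_{n+1}(a+1)-B_{n+1}(a)}{n+1} = a^n$ by one application of the difference equation; and the observation that for $n=0$ the formula still makes sense and gives $S_{0,a}(M) = M-1$) fall out of the same telescoping with no extra work. Finally, since $B_{n+1}(x)$ has rational coefficients, so does $P(x)$, which justifies the remark that $S_{n,a}(x)$ has rational coefficients; and specializing $a=1$ recovers $S_n(x) = \frac{B_{n+1}(x) - B_{n+1}}{n+1}$ since $B_{n+1}(1) = B_{n+1}(0) = B_{n+1}$, consistent with the formula quoted in the Introduction.

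There is essentially no serious obstacle here: the only point requiring a moment's care is bookkeeping the index shift so that the telescoped right-hand side has summand $(k+a)^n$ over the range $0 \le k \le M-2$ rather than an off-by-one variant, and confirming that the lower endpoint of the telescope contributes precisely $B_{n+1}(a)$. Everything else is a routine one-line telescoping argument.
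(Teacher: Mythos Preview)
Your proposal is correct and follows essentially the same approach as the paper: both use the difference equation $B_{n+1}(y+1)-B_{n+1}(y)=(n+1)y^n$ to rewrite each summand $(k+a)^n$ as a telescoping difference of Bernoulli values, then collapse the sum to obtain $\dfrac{B_{n+1}(M+a-1)-B_{n+1}(a)}{n+1}$. The paper's writeup is terser (it applies the difference equation directly with $y=k+a$ and omits your edge-case discussion), but the argument is identical.
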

\begin{proof} For every  integer $M\geq 2$, one has
\[
\begin{aligned}
S_{n,a}(M)&= \sum_{k=0}^{M-2} (k+a)^n\\
 &= \sum_{k=0}^{M-2} \dfrac{B_{n+1}(k+a+1)- B_{n+1}(k+a)}{n+1}\\ &=\dfrac{B_{n+1}(M+a-1)-B_{n+1}(a)}{n+1}.
\end{aligned}
\]
\end{proof}
We have the following theorem which generalizes the fact on \cite[page 459]{[Minac]}.
\begin{thm}
\label{thm:special value Hurwitz}
\[
\int_{1-a}^{2-a} S_{n,a}(x)dx = \zeta(-n,a).
\]
\end{thm}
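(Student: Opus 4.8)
The plan is to reduce everything to Proposition \ref{prop:existence} and Theorem \ref{Hurwitz1} via an elementary change of variables. First I would substitute the explicit formula $S_{n,a}(x) = \dfrac{B_{n+1}(x+a-1)-B_{n+1}(a)}{n+1}$ from Proposition \ref{prop:existence} into the integral, pulling out the factor $\dfrac{1}{n+1}$ and splitting off the constant term $B_{n+1}(a)$, which simply contributes $-\dfrac{B_{n+1}(a)}{n+1}$ since the interval $[1-a,2-a]$ has length $1$.

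Next I would make the linear substitution $u = x+a-1$, which maps the interval of integration $[1-a,2-a]$ exactly onto $[0,1]$ and turns the remaining integral into $\dfrac{1}{n+1}\int_0^1 B_{n+1}(u)\,du$. The key step is then to observe that this integral vanishes: using the antiderivative relation $B_{n+2}'(u) = (n+2)B_{n+1}(u)$, one gets $\int_0^1 B_{n+1}(u)\,du = \dfrac{B_{n+2}(1)-B_{n+2}(0)}{n+2} = 0$, because $B_{n+2}(1)=B_{n+2}(0)$ for $n+2 \geq 1$ (a fact already recorded in the preliminaries of this section).

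Combining these, the integral equals $-\dfrac{B_{n+1}(a)}{n+1}$, and Theorem \ref{Hurwitz1} identifies this with $\zeta(-n,a)$, completing the argument. There is no serious obstacle here; the only point requiring a word of care is the bookkeeping in the change of variables and making sure the endpoints $1-a$ and $2-a$ are chosen precisely so that $u$ ranges over $[0,1]$, which is exactly what makes the Bernoulli-polynomial integral vanish. It is worth remarking that this choice of interval is the natural one dictated by Proposition \ref{prop:existence}: since $S_{n,a}(M)$ counts $\sum_{k=0}^{M-2}(k+a)^n$, the polynomial $S_{n,a}$ is built from $B_{n+1}$ shifted by $a-1$, so integrating against the shifted unit interval $[1-a,2-a]$ is what aligns the computation with the standard normalization $\int_0^1 B_{m}=0$.
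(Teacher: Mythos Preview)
Your argument is correct and coincides with the paper's first proof of Theorem~\ref{thm:special value Hurwitz}: substitute Proposition~\ref{prop:existence}, use the antiderivative $B_{n+2}'=(n+2)B_{n+1}$ so that the non-constant piece integrates to $\dfrac{B_{n+2}(1)-B_{n+2}(0)}{(n+1)(n+2)}=0$, and identify the remaining $-\dfrac{B_{n+1}(a)}{n+1}$ with $\zeta(-n,a)$ via Theorem~\ref{Hurwitz1}. The only cosmetic difference is that you phrase the computation via the change of variables $u=x+a-1$ whereas the paper evaluates the antiderivative directly at the endpoints; the paper also supplies two further, independent proofs (one expanding $\zeta(s,a)$ in terms of $\zeta(s+k)$ and one via a recursion for the $\zeta(-n,a)$ obtained by telescoping), but your approach already matches their primary one.
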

We will give three different proofs for this theorem. First, let us explain our first proof using the relation between $S_{n,a}(x)$ and Bernoulli polynomials. 

\begin{proof}[First proof of Theorem~\ref{thm:special value Hurwitz}]
One has

\begin{align*}
\int_{1-a}^{2-a} S_{n,a}(x)dx &= \int_{1-a}^{2-a} \dfrac{B_{n+1}(x+a-1)-B_{n+1}(a)}{n+1} dx\\
&= \dfrac{1}{n+1}\dfrac{B_{n+2}(x+a-1)}{n+2}\bigg\rvert_{1-a}^{2-a} - \dfrac{B_{n+1}(a)}{n+1} x\bigg\rvert_{1-a}^{2-a}\\
&= \dfrac{B_{n+2}(1)-B_{n+2}(0)}{(n+1)(n+2)} - \dfrac{B_{n+1}(a)}{n+1}\\
&= \zeta(-n,a).
\qedhere
\end{align*}

\end{proof}
We provide another proof using the relation between values of the Hurwitz zeta functions and values of the classical zeta function.

By definition when $n=0$, $S_{0,a}(x)=x-1$.  In this case, the above proposition is true because 
\[ \int_{1-a}^{2-a} (x-1)dx=\frac{1}{2}-a =\zeta(0, a) .\] 
From now on, we assume $n>0$. We recall the following proposition which expresses the Hurwitz zeta functions in term of the Riemann zeta function. 
\begin{prop} [See \cite{[Kirsten]}, Theorem 2.2] 
For $0 < a \leq 1$, we have 
\[ \zeta(s,a)=\frac{1}{a^s}+\sum_{k=0}^{\infty} (-1)^k \frac{\Gamma(s+k)}{\Gamma(s) k!} a^{k} \zeta(s+k) .\] 
\end{prop}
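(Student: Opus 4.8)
The plan is to establish the identity first on the range where everything converges absolutely — $\mathrm{Re}(s)>1$ and $0<a<1$ — and then to extend it in $s$ by analytic continuation. First I would peel off the term $n=0$ in the defining series, which contributes exactly $a^{-s}$. For each $n\ge 1$ we have $0<a/n<1$, so the generalized binomial theorem applies to $(n+a)^{-s}=n^{-s}(1+a/n)^{-s}$ and yields
\[
(n+a)^{-s}=\sum_{k=0}^{\infty}\binom{-s}{k}\frac{a^{k}}{n^{s+k}},
\qquad
\binom{-s}{k}=\frac{(-1)^{k}\,s(s+1)\cdots(s+k-1)}{k!}=(-1)^{k}\frac{\Gamma(s+k)}{\Gamma(s)\,k!}.
\]

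Next I would sum over $n\ge 1$ and interchange the two summations; justifying this exchange is the crux of the first part. It is legitimate because the double series is absolutely convergent: with $\sigma=\mathrm{Re}(s)>1$, the standard estimate $\bigl|\binom{-s}{k}\bigr|=O(k^{\sigma-1})$ gives $\sum_{k\ge0}\bigl|\binom{-s}{k}\bigr|a^{k}<\infty$ for $0<a<1$, and therefore
\[
\sum_{n\ge1}\sum_{k\ge0}\Bigl|\binom{-s}{k}\Bigr|\frac{a^{k}}{n^{\sigma+k}}
\;\le\;\zeta(\sigma)\sum_{k\ge0}\Bigl|\binom{-s}{k}\Bigr|a^{k}\;<\;\infty .
\]
After exchanging the order of summation the inner sum over $n$ is $\sum_{n\ge1}n^{-(s+k)}=\zeta(s+k)$, and collecting everything gives the claimed formula for $\mathrm{Re}(s)>1$, $0<a<1$. (One may equally view this step as computing the Taylor expansion at $a=0$ of the map $a\mapsto\zeta(s,a)-a^{-s}=\sum_{n\ge1}(n+a)^{-s}$, which is holomorphic on the disk $|a|<1$.)

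Finally I would promote the identity to all $s\ne 1$ by analytic continuation. For fixed $a$ with $0<a<1$, writing $\zeta(s+k)=1+(\zeta(s+k)-1)$ splits the right-hand series into the entire function $(1+a)^{-s}$ plus a series whose general term carries the factor $\zeta(s+k)-1=\sum_{m\ge2}m^{-s-k}$, which decays geometrically in $k$; this second series converges locally uniformly and is holomorphic on $\CC\setminus\{1\}$ (for $k\ge1$ the simple pole of $\zeta(s+k)$ at $s=1-k$ is killed by the vanishing factor $s+k-1$ in $s(s+1)\cdots(s+k-1)$, and the lone surviving pole, at $s=1$, comes from the $k=0$ term). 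Thus the right-hand side is meromorphic in $s$ with a single simple pole at $s=1$, exactly as $\zeta(s,a)$, and since the two sides agree for $\mathrm{Re}(s)>1$ the identity theorem forces agreement everywhere. The step I expect to be the main obstacle is the boundary value $a=1$ included in the statement: there $a/n=1$ at $n=1$, the binomial series $\sum_{k}\binom{-s}{k}$ diverges once $\mathrm{Re}(s)\ge1$, and in fact the right-hand series converges at $a=1$ only in the half-plane $\mathrm{Re}(s)<1$; so at $a=1$ the formula — which there amounts to $\zeta(s)=\zeta(s,1)$ together with a classical, merely conditionally convergent, expansion of $\zeta(s)-1$ — must be obtained by a separate limiting argument (continuity in $a$, or Abel summation) rather than by the Fubini computation above. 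This is exactly why we content ourselves with citing \cite{[Kirsten]} for the statement in the form we need.
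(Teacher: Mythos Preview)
The paper does not supply its own proof of this proposition: it is quoted verbatim as \emph{[See \cite{[Kirsten]}, Theorem 2.2]} and used as an input to the second proof of Theorem~\ref{thm:special value Hurwitz}. So there is nothing in the paper to compare your argument against.

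Your sketch is the standard and correct derivation: separate the $n=0$ term, expand $(1+a/n)^{-s}$ by the binomial series for $n\ge 1$, justify the interchange by absolute convergence for $\mathrm{Re}(s)>1$ and $0<a<1$, identify the inner sum as $\zeta(s+k)$, and then continue analytically in $s$. Your observation about the boundary case $a=1$ is accurate and is in fact more careful than the paper needs to be, since in the application (equation~\eqref{first_relation}) the formula is only evaluated at negative integers $s=-n$, where the right-hand sum truncates to finitely many terms and no convergence issue arises.
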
 

\begin{proof}[Second proof of Theorem~\ref{thm:special value Hurwitz}]
We recall that that $\zeta(s)$ has a simple pole at $s=1$ with residue $1$ and $\Gamma(s)$ has a simple pole at $-m$ for $m \in \NN$ with residue $\dfrac{(-1)^m}{m!}$. By plugging in $s=-n$ and using the above formula and these two facts,  we have the following relations 
\begin{equation} 
\label{first_relation} 
\zeta(-n,a)=a^{n}-\frac{a^{n+1}}{n+1}+\sum_{k=0}^{n} {n \choose k} a^k \zeta(k-n). 
\end{equation} 
Let us consider the sum 
\begin{align*} 
S_{n,a}(M) &=\sum_{m=0}^{M-2}(a+m)^n \\ 
                   &=\sum_{m=0}^{M-2} \left( \sum_{k=0}^{n} {n \choose k} a^{k} m^{n-k} \right) \\ 
                   &=\sum_{k=0}^{n} {n \choose k} a^{k} \left(\sum_{m=0}^{M-2} m^{n-k} \right) \\
                   &=a^{m} S_{0}(x)+ \sum_{k=0}^{n-1} {n \choose k} a^{k} S_{n-k}(M-1).
\end{align*} 
Since this is true for all $M \in \NN$, we conclude that 
\begin{equation} \label{eq:recursive_s_a} 
	S_{n,a}(x)=a^{n} S_{0}(x)+\sum_{k=0}^{n-1} {n \choose k} a^{k} S_{n-k}(x-1). 
\end{equation} 	
	
By integrating both sides from $1-a$ to $2-a$ we get 
\begin{equation} \label{second_relation}
\int_{1-a}^{2-a} S_{n,a}(x)dx=a^{m} \int_{1-a}^{2-a} S_0(x)+ \sum_{k=0}^{n-1} {n \choose k} a^k \int_{1-a}^{2-a} S_{n-k}(x-1) dx. 
\end{equation} 
To compute the right hand side, we need the following lemma. 
\begin{lem} 
\label{integration}
For $k \geq 1$, we have 
\[ \int_{1-a}^{2-a} S_{k}(x-1)dx=\frac{(-a)^{k+1}}{k+1}+\zeta(-k) .\] 
\end{lem} 
\begin{proof}
First, we recall that 
\[ S_{k}(x)=\frac{B_{k+1}(x)-B_{k+1}(1)}{k+1} .\] 
Therefore, we have
\begin{equation*}
\begin{split}
\int_{1-a}^{2-a} S_{n}(x-1)dx 
  &= \int_{-a}^{1-a} S_{n}(x)dx \\
  &= \int_{-a}^{1-a} \frac{B_{k+1}(x)-B_{k+1}(1)}{k+1}  \\
  &= \frac{(-a)^{k+1}}{k+1}-\frac{B_{k+1}(1)}{k+1} \\
  &=\frac{(-a)^{k+1}}{k+1}+\zeta(-k).  
 \end{split}
 \end{equation*} 
Here we use the fact that $\zeta(-k)=-\dfrac{B_{k+1}(1)}{k+1}$. 
\end{proof} 

Note that when $k=0$, the above statement is not true. Instead, we have 
\[ \int_{1-a}^{2-a} S_{0}(x)dx= \zeta(0,a)=\frac{1}{2}-a =\zeta(0)+(1-a) .\] 
By Lemma $\ref{integration}$ and the above remark, equation $\ref{second_relation}$ becomes 
\begin{align*}
 \int_{1-a}^{2-a} S_{n,a}(x)dx &= a^n(\zeta(0)+(1-a))+  \sum_{k=0}^{n-1} {n \choose k} a^{k} \left(\frac{(-a)^{n-k+1}}{n-k+1}+\zeta(n-k) \right)\\ 
                                               &=a^n(1-a)+ \sum_{k=0}^{n} {n \choose k}  a^k \zeta(k-n)+\frac{a^{n+1}}{n+1}  \sum_{k=0}^{n-1} (-1)^{n-k+1} {{n+1} \choose k}.    
\end{align*} 
Note that 
\[ \sum_{k=0}^{n+1} (-1)^{n-k+1} {{n+1} \choose k} =(1-1)^{n+1}=0.\] 
Therefore
\[ \sum_{k=0}^{n-1} (-1)^{n-k+1} {{n+1} \choose k} = -(-(n+1)+1)=n .\]  

 Consequently, we have 

\begin{align*}
\int_{1-a}^{2-a} S_{n,a}(x)dx &=a^n(1-a)+\frac{na^{n+1}}{n+1} +\sum_{k=0}^{n} {n \choose k}  a^k \zeta(k-n)\\ 
                                              &=a^n -\frac{a^{n+1}}{n+1} +\sum_{k=0}^{n} {n \choose k}  a^k \zeta(k-n). 
\end{align*} 
Compare this equation and equation $\ref{first_relation}$, we conclude that 
\[
 \zeta(-n,a)=\int_{1-a}^{2-a} S_{n,a}(x)dx .
\qedhere
\] 
\end{proof}

Finally, we provide another proof without using the Bernoulli polynomials. Instead, we use the relation between values of the Hurwitz zeta functions to prove this theorem.  Note that our approach is very close to the second approach discussed in \cite{[Minac]}. 

\begin{proof}[Third proof of Theorem~\ref{thm:special value Hurwitz}]
For each $n \geq 1$ we have 
\begin{align*} 
\frac{1}{(n+a-1)^{s-1}}-\frac{1}{(n+a)^{s-1}} &=\frac{1}{(n+a)^s} \left( (1-\frac{1}{n+a})^{-(s-1)}-1 \right) \\
                                                                    &=\frac{1}{(n+a)^{s-1}} \left(\frac{s-1}{(n+a)}+ \frac{(s-1)s}{2!} \frac{1}{(n+a)^{2}}+\cdots \right)\\
                                                                    &=\sum_{k=0}^{\infty} \frac{\Gamma(s+k)}{\Gamma(s-1)(k+1)!} \frac{1}{(a+n)^{s+k}}.
\end{align*} 
Summing over all $n$ we have 
\begin{align*}
 \frac{1}{a^{s-1}} &=\sum_{n=1}^{\infty} \left[ \sum_{k=0}^{\infty} \frac{\Gamma(s+k)}{\Gamma(s-1)(k+1)!} \frac{1}{(a+n)^{s+k}} \right]\\ 
                           &=\sum_{k=0}^{\infty}  \frac{\Gamma(s+k)}{\Gamma(s-1)(k+1)!}  \left[ \sum_{n=1}^{\infty} \frac{1}{(a+n)^{s+k}} \right]\\
                           &=\sum_{k=0}^{\infty}  \frac{\Gamma(s+k)}{\Gamma(s-1)(k+1)!} (\zeta(s+k,a)-\frac{1}{a^{s+k}}).
\end{align*} 
We remark that $\zeta(s,a)$ is a meromorphic function with the only simple pole at $s=1$. Furthermore, the residue at $s=1$ is $1$. Furthermore, for each $m \in \NN$, $\Gamma(s)$ has simple pole at $-m$ with residue $\dfrac{(-1)^m}{m!}$. By plugging in $s=-n$ and using these two facts, we have the following 
\begin{equation} \label{relation_1}
a^{n+1}=\sum_{k=0}^n (-1)^{k+1} {{n+1} \choose {k+1}} (\zeta(k-n,a)-a^{n-k})+\frac{(-1)^{n+1}}{n+2} . 
\end{equation} 
Next, we show that the integrals $\int_{1-a}^{2-a} S_{n,a}(x)dx$ satisfy a similar relation. To do so, we use the following collapsing sum
\begin{align*}
(M+a-2)^{n+1} &=a^{n+1}+\sum_{k=1}^{M-2}[(k+a)^{n+1}-(k+a-1)^{n+1}]\\
                        &=a^{n+1}+\sum_{k=1}^{M-2} \left[ \sum_{m=1}^{n+1} {{n+1} \choose m} (-1)^{m+1} (k+a)^{n+1-m} \right]\\
                        &=a^{n+1}+ \sum_{m=1}^{n+1} \left[ (-1)^{m+1} {{n+1} \choose m} \sum_{k=1}^{M-2} (k+a)^{n-m+1} \right]\\
                        &=a^{n+1}+\sum_{m=1}^{n+1} (-1)^{m+1} {{n+1} \choose m} (S_{n-m+1}(M)-a^{n-m+1})\\
                        &=a^{n+1}+\sum_{k=0}^{n} (-1)^{k} {{n+1} \choose k+1} (S_{n-k}(M)-a^{n-k}).                         
\end{align*} 
The last equality is obtained by substituting $m=k+1$.

By integrating both sides of this equation, we have
\begin{equation}  \label{relation_2}
\int_{1-a}^{2-a} (x+a-2)^{n+1}dx=a^{n+1}+\sum_{k=0}^{n} (-1)^{k} {{n+1} \choose k+1} \left(\int_{1-a}^{2-a} S_{n-k}(x)dx -a^{n-k} \right).
\end{equation} 
We have
\[ \int_{1-a}^{2-a} (x+a-2)^{n+1}dx=\int_{-1}^{0} x^{n+1}dx=\frac{(-1)^{n+1}}{n+2} .\] 
Therefore, equation $\ref{relation_2}$ can be written as 
\begin{equation} \label{relation_3}
a^{n+1}=\sum_{k=0}^{n} (-1)^{k+1} {{n+1} \choose k+1} \left(\int_{1-a}^{2-a} S_{n-k}(x)dx -a^{n-k} \right)+\frac{(-1)^{n+1}}{n+2}.
\end{equation} 
We know that $\int_{1-a}^{2-a}S_{0,a}(x)dx=\dfrac{1}{2}-a=\zeta(0,a)$. By equations $\ref{relation_1}$ and $\ref{relation_3}$, we can conclude that 
\[ \int_{1-a}^{2-a} S_{a,n}(x)dx=\zeta(-n,a) .
\qedhere
\] 
\end{proof}

As a corollary of Theorem~\ref{thm:special value Hurwitz}, one obtains
\[
\zeta(-n)=\zeta(-n,1) =\int_{0}^1 S_{n,1}(x)dx= \int_{0}^1 S_n(x) dx. 
\]

Let us provide some concrete examples for Theorem \ref{thm:special value Hurwitz}. 
\begin{examples} \label{S_a} Let $0<a \leq 1$. Then we have 
\[ S_{0,a}(x)=x-1 .\] 
\[ S_{1,a}(x)=\sum_{k=0}^{x-2} (a+k)=\frac{(2a+x-2)(x-1)}{2}.\]
Inductively, we can compute $S_{n,a}(M)$ using the  recursive relation \ref{eq:recursive_s_a} 
\[ S_{n,a}(x)=a^{n} S_{0}(x)+ \sum_{k=0}^{n-1} {n \choose k} a^{k} S_{n-k}(x-1).\] 
For example, for $n=2$ we have 
\[ S_{2,a}(x)=\sum_{k=0}^{x-2} (a+k)^2=(x-1)a^2+a(x-2)(x-1)+\frac{(x-2)(x-1)(2x-3)}{6} .\] 
For $n=3$ we have 
\[ S_{3,a}(x)=a^3(x-1)+3a^2 \frac{(x-1)(x-2)}{2}+3a \frac{(x-2)(x-1)(2x-3)}{6}+\frac{((x-1)(x-2))^2}{4} .\] 
\end{examples}

\begin{examples}
Let us consider the special case when $a=\frac{1}{2}$. In this case, the Hurwitz zeta function is given by 
\[ \zeta(s,\frac{1}{2})=\sum_{n=0}^{\infty} \frac{1}{(n+\frac{1}{2})^s}=2^{s} \sum_{n=0}^{\infty} \frac{1}{(2n+1)^s} \]
By the above calculations, we have 
\[ S_{0, \frac{1}{2}}(x)=x-1, \]
and 
\[ S_{1, \frac{1}{2}}(x)=\frac{(x-1)^2}{2}. \]
By Theorem \ref{thm:special value Hurwitz}, we have 
\[ \zeta(0, \frac{1}{2})=\int_{\frac{1}{2}}^{\frac{3}{2}} (1-x)dx=0.\]
We also have 
\[ \zeta(1, \frac{1}{2})=\int_{\frac{1}{2}}^{\frac{3}{2}} \frac{(x-1)^2}{2}dx=\frac{1}{24}.\]

\end{examples}

\begin{cor} If $n=2k+1$ is an odd positive integer, then
\[
\zeta(-2k-1)=(-1)^{k+1} (area),
\]
here $(area)$ is the area of a region bounded by the $x$-axis and the graph of the function $\dfrac{B_{2k+2}(x)-B_{2k+2}}{2k+2}$ on the interal $[0,1]$. 
\end{cor}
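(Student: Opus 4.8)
The plan is to deduce the corollary directly from Theorem~\ref{thm:special value Hurwitz} applied with $a=1$, together with the sign and monotonicity properties of $B_{2k+2}(x)-B_{2k+2}$ on $[0,1]$ that are already quoted in the Introduction from \cite[Chapter 1]{[Rademacher]}. First I would specialize the theorem at $a=1$ to obtain
\[
\zeta(-2k-1)=\int_0^1 S_{2k+1}(x)\,dx=\int_0^1 \frac{B_{2k+2}(x)-B_{2k+2}(1)}{2k+2}\,dx,
\]
using Proposition~\ref{prop:existence} (with $a=1$) to rewrite $S_{2k+1}(x)$ in terms of Bernoulli polynomials, and recalling $B_{2k+2}(1)=B_{2k+2}(0)=B_{2k+2}$.

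Next I would invoke the fact (stated in the Introduction) that $B_{2n}(x)-B_{2n}$ does not change sign on $[0,1]$; hence the integral $\int_0^1 \frac{B_{2k+2}(x)-B_{2k+2}}{2k+2}\,dx$ is, up to a sign, the genuine area of the region bounded by the $x$-axis and the graph of $\frac{B_{2k+2}(x)-B_{2k+2}}{2k+2}$ over $[0,1]$. It remains to pin down that the sign is $(-1)^{k+1}$. For this I would use the classical sign pattern of the Bernoulli numbers, namely $(-1)^{k+1}B_{2k+2}>0$, combined with the value of the integral: since $\int_0^1 B_{2k+2}(x)\,dx = 0$ (because $B_{2k+2}(x)=\tfrac{1}{2k+3}B_{2k+3}'(x)$ and $B_{2k+3}(1)=B_{2k+3}(0)$), the integral collapses to $-\frac{B_{2k+2}}{2k+2}$, which is exactly $\zeta(-2k-1)$ and has sign $(-1)^{k+1}$. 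Therefore the absolute value of $\zeta(-2k-1)$ equals the area in question and the sign is $(-1)^{k+1}$, giving $\zeta(-2k-1)=(-1)^{k+1}(\mathrm{area})$.

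I do not anticipate a serious obstacle here: every ingredient is either proved in the excerpt (Theorem~\ref{thm:special value Hurwitz}, Proposition~\ref{prop:existence}) or explicitly cited from \cite{[Rademacher]} in the Introduction (the non-vanishing of sign of $B_{2n}(x)-B_{2n}$ and the alternating signs of $B_{2n}$). The only point requiring a line of care is making the phrase ``area of a region bounded by the $x$-axis and the graph'' precise: one must note that because $\frac{B_{2k+2}(x)-B_{2k+2}}{2k+2}$ has constant sign on $(0,1)$ and vanishes at the endpoints, this region is well-defined and its area equals $\left|\int_0^1 \frac{B_{2k+2}(x)-B_{2k+2}}{2k+2}\,dx\right| = \left|\frac{B_{2k+2}}{2k+2}\right|$, so that multiplying by $(-1)^{k+1}$ recovers $\zeta(-2k-1) = -\frac{B_{2k+2}}{2k+2}$.
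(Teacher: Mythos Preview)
Your proof is correct and follows essentially the same route as the paper's: express $\zeta(-2k-1)$ as $\int_0^1 \frac{B_{2k+2}(x)-B_{2k+2}}{2k+2}\,dx$ via Theorem~\ref{thm:special value Hurwitz} and Proposition~\ref{prop:existence}, invoke the constant sign of $B_{2k+2}(x)-B_{2k+2}$ on $(0,1)$ from Rademacher, and pin down the sign via the alternating pattern of even Bernoulli numbers. One slip to fix: the correct sign pattern is $(-1)^{k}B_{2k+2}>0$ (not $(-1)^{k+1}B_{2k+2}>0$, as one checks with $B_2=1/6$); your subsequent conclusion that $\zeta(-2k-1)=-B_{2k+2}/(2k+2)$ has sign $(-1)^{k+1}$ is nonetheless correct once this is adjusted.
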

\begin{proof}
One has
\[
\zeta(-2k-1)= \int_{0}^1 S_{2k+1}(x) dx= \int_{0}^1 \dfrac{B_{2k+2}(x)-B_{2k+2}}{2k+2} dx.
\]
On the other hand, 
\[\zeta(-2k-1)=-\dfrac{B_{2k+2}}{2k+2}=(-1)^{k+1} \dfrac{(-1)^kB_{2k+2}}{2k+2}.\] 
Hence \[\dfrac{(-1)^kB_{2k+2}}{2k+2}=(-1)^{k+1}\zeta(-2k-1))= \int_{0}^1 (-1)^{k+1}\dfrac{B_{2k+2}(x)-B_{2k+2}}{2k+2} dx.\]
By \cite[Page 8]{[Rademacher]}, $B_{2k+2}(x)-B_{2k+2}$ does not change sign between $0$ and $1$
 and $(-1)^kB_{2k+2}>0$.
Therefore 
 $\phi_{k}(x):=(-1)^{k+1}(B_{2k+2}(x)-B_{2k+2})/(2k+2)>0$ on $(0,1)$.
Hence $\int_0^1 \phi_k(x)dx$ is the area of the region bounded by the $x$-axix and the graph of $\phi_k(x)$ on the interval $[0,1]$,  and 
\[\zeta(-2k-1)=(-1)^{k+1}\int_0^1 \phi_k(x)dx=(-1)^{k+1}(area).\qedhere\]
\end{proof}

\begin{prop} 
\[
S_n(x) + (-1)^n S_n(1-x)=0.
\]
\end{prop}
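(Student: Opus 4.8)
The plan is to express $S_n(x)$ in terms of Bernoulli polynomials using Proposition~\ref{prop:existence} with $a=1$, namely
\[
S_n(x) = \frac{B_{n+1}(x) - B_{n+1}(1)}{n+1},
\]
and then exploit the reflection formula $B_{n+1}(1-x) = (-1)^{n+1} B_{n+1}(x)$ quoted in Section~2. First I would substitute $1-x$ for $x$ in the formula for $S_n$, obtaining $S_n(1-x) = \bigl(B_{n+1}(1-x) - B_{n+1}(1)\bigr)/(n+1)$, and rewrite $B_{n+1}(1-x)$ as $(-1)^{n+1} B_{n+1}(x)$. This turns $S_n(1-x)$ into $\bigl((-1)^{n+1}B_{n+1}(x) - B_{n+1}(1)\bigr)/(n+1)$.

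Next I would compute $S_n(x) + (-1)^n S_n(1-x)$ directly. The $B_{n+1}(x)$ terms combine with coefficient $1 + (-1)^n(-1)^{n+1} = 1 + (-1)^{2n+1} = 1 - 1 = 0$, so they cancel. What remains is $-B_{n+1}(1)/(n+1) - (-1)^n B_{n+1}(1)/(n+1) = -\bigl(1 + (-1)^n\bigr) B_{n+1}(1)/(n+1)$. To finish I need this to vanish: when $n$ is odd, $1+(-1)^n = 0$ and we are done; when $n$ is even, $n+1$ is odd and $B_{n+1}(1) = B_{n+1}(0) = B_{n+1} = 0$ for odd index $\geq 3$, while for $n=0$ one checks $S_0(x) + S_0(1-x) = (x-1) + (-x) $... wait, that is $-1$, not $0$ — so I should be careful about the $n=0$ case and note the statement is intended for $n\geq 1$, or handle $n=0$ separately (indeed $B_1(1) = 1/2 \neq 0$, consistent with the stated proposition presumably ranging over $n \geq 1$).

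An alternative, perhaps cleaner, route avoids Bernoulli polynomials entirely and works from the defining property of $S_n$. For a positive integer $M$, $S_n(M) + (-1)^n S_n(1-M)$ should be shown to be the zero polynomial; since it suffices to check vanishing at infinitely many integer arguments, I would relate $S_n(1-M)$ to a sum over negative-ish arguments. Concretely, from $S_n(x+1) - S_n(x) = x^n$ (the difference equation inherited from Proposition~\ref{prop:existence}, valid as a polynomial identity), one can extend $S_n$ to negative integer arguments and show $S_n(-m) = (-1)^{n+1}\sum_{k=1}^{m} k^n \cdot(\text{sign bookkeeping})$, then match terms. I expect the main obstacle to be purely bookkeeping: getting the index ranges and the sign $(-1)^n$ exactly right, together with correctly excluding or separately treating $n=0$. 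The Bernoulli-polynomial approach sidesteps this cleanly, so that is the one I would write up, with a one-line remark on the $n=0$ exception.
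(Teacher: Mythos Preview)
Your Bernoulli-polynomial approach is correct and is genuinely different from the paper's argument. The paper does not invoke the reflection formula $B_{n+1}(1-x)=(-1)^{n+1}B_{n+1}(x)$ at all; instead it uses only the difference equation $S_n(x+1)-S_n(x)=x^n$ together with the boundary values $S_n(0)=S_n(1)=0$, showing that $p_n(x):=S_n(x)+(-1)^nS_n(1-x)$ satisfies $p_n(x+1)-p_n(x)=0$, hence is constant, and then evaluates $p_n(0)=0$. This is essentially the ``alternative route'' you sketched at the end, though the paper organizes it more cleanly by working with the polynomial $p_n$ directly rather than checking integer arguments. Your approach has the virtue of being a one-line substitution once the formula for $S_n$ and the reflection identity are on the table; the paper's approach has the virtue of being self-contained, needing nothing about Bernoulli polynomials beyond the defining difference relation of $S_n$. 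Your observation about $n=0$ is correct and applies equally to the paper's proof, which tacitly uses $S_n(0)=0$; the proposition is implicitly for $n\geq 1$ (in this paper $\mathbb{N}$ excludes $0$).
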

\begin{proof}
We first observe that $S_n(x+1)-S_n(x)=x^n$ and $S_n(1)=S_n(0)=0$. 
 Now let us consider the polynomial
\[
p_n(x)=S_n(x) + (-1)^n S_n(1-x).
\]
We have
\[
\begin{aligned}
p_n(x+1)-p_n(x)&= [S_n(x+1)-S_n(x)]- (-1)^n [S_n(-x)-  S_n(1-x)]\\
&=x^n -(-1)^n(-x)^n\\
&=0.
\end{aligned}
\]
Hence $p_n(x)$ must be a constant. Therefore
\[
p_n(x)=S_n(0)+(-1)^nS_n(1)=0.
\qedhere
\]
\end{proof}

\begin{prop} Suppose $n$ is an even positive integer. Then
\[
\zeta(-n)=\int_{0}^1 S_n(x) dx=0\] 
and
\[
\int_{0}^{1/2} S_n(x) dx =\dfrac{(2^{-n-1}-2)}{(n+1)(n+2)}B_{n+2}.
\]
\end{prop}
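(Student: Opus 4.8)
The plan is to express $S_n$ through Bernoulli polynomials and then evaluate both integrals by the fundamental theorem of calculus, using only the standard identities recalled in Section~2 together with one additional classical fact about Bernoulli polynomials at the point $1/2$.

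\emph{Vanishing.} First I would recall the corollary of Theorem~\ref{thm:special value Hurwitz}, which already gives $\int_0^1 S_n(x)\,dx=\zeta(-n)$. To see that this quantity is $0$, apply Theorem~\ref{Hurwitz1} with $a=1$: $\zeta(-n)=-B_{n+1}(1)/(n+1)=-B_{n+1}/(n+1)$, and since $n$ is even and positive the index $n+1$ is odd and $\geq 3$, so $B_{n+1}=0$. Alternatively, one can avoid citing anything about Bernoulli numbers and instead use the Proposition proved just above: it states $S_n(x)+(-1)^n S_n(1-x)=0$, so for even $n$ the substitution $x\mapsto 1-x$ in $\int_0^1 S_n(x)\,dx$ shows this integral equals its own negative.

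\emph{The half-interval integral.} By Proposition~\ref{prop:existence} with $a=1$ one has $S_n(x)=\frac{B_{n+1}(x)-B_{n+1}(1)}{n+1}$, and because $n+1\geq 3$ the constant $B_{n+1}(1)=B_{n+1}(0)=B_{n+1}$ equals $0$, so in fact $S_n(x)=B_{n+1}(x)/(n+1)$. Using the antiderivative relation $B_{n+1}(x)=B_{n+2}'(x)/(n+2)$ from Section~2, the integral collapses to
\[
\int_0^{1/2} S_n(x)\,dx=\frac{1}{(n+1)(n+2)}\bigl(B_{n+2}(1/2)-B_{n+2}(0)\bigr)=\frac{1}{(n+1)(n+2)}\bigl(B_{n+2}(1/2)-B_{n+2}\bigr).
\]
It remains to evaluate $B_{n+2}(1/2)$. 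For this I would invoke the duplication identity $B_m(1/2)=(2^{1-m}-1)B_m$, which can be derived in one line by setting $x=0$ in the distribution relation $B_m(0)=2^{m-1}\bigl(B_m(0)+B_m(1/2)\bigr)$, or directly from the generating function $\frac{t e^{xt}}{e^t-1}$ via $x=1/2$ and $t\mapsto 2t$. Taking $m=n+2$ gives $B_{n+2}(1/2)=(2^{-n-1}-1)B_{n+2}$, hence $B_{n+2}(1/2)-B_{n+2}=(2^{-n-1}-2)B_{n+2}$, which is exactly the asserted numerator.

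\emph{Expected difficulty.} The argument is entirely routine once the value $B_m(1/2)=(2^{1-m}-1)B_m$ is available; the only real ``obstacle'' is recalling (or re-deriving) that identity and keeping track of the two powers $n+1$ and $n+2$. Note that the simplification $S_n(x)=B_{n+1}(x)/(n+1)$ uses $B_{n+1}=0$, i.e.\ it is exactly the hypothesis that $n$ be even that makes the surviving linear term $-B_{n+1}(1)x/(n+1)$ disappear; for odd $n$ it would contribute and the closed form would be different.
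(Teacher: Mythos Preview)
Your proposal is correct and follows essentially the same route as the paper: both arguments express $S_n$ via Bernoulli polynomials, integrate using $B_{n+1}'=(n+1)B_n$, and finish with the identity $B_k(1/2)=(2^{1-k}-1)B_k$. The only cosmetic differences are that the paper proves the vanishing by explicitly splitting $\int_0^1$ at $1/2$ and applying the symmetry $S_n(x)+S_n(1-x)=0$ (your second alternative), and that it keeps the constant $-B_{n+1}$ in the integrand rather than first observing $B_{n+1}=0$---but since that constant is zero the computations are identical.
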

\begin{proof} 
Since $n$ is even, $S_n(x)+S_n(1-x)=0$. Hence
\[
\begin{aligned}
 \zeta(-n)&=  \int_{0}^1 S_n(x) dx  = \int_{0}^{1/2} S_n(x) dx + \int_{1/2}^1 S_n(x) dx\\
&=\int_{0}^{1/2} S_n(x) dx - \int_{1/2}^{0} S_n(1-x) dx\\
&= \int_{0}^{1/2} S_n(x) dx + \int_{0}^{1/2} S_n(1-x) dx\\
&=0.
\end{aligned}
\]

For the second statement, one has
\[
\begin{aligned}
\int_{0}^{1/2} S_n(x)dx &= \int_0^{1/2} \dfrac{B_{n+1}(x)-B_{n+1}}{n+1} dx\\
&= \dfrac{1}{n+1}\dfrac{B_{n+2}(x)}{n+2}\bigg\rvert_{0}^{1/2} \\
&= \dfrac{B_{n+2}(1/2)-B_{n+2}(0)}{(n+1)(n+2)}\\
&= \dfrac{(2^{-n-1}-2)}{(n+1)(n+2)}B_{n+2},
\end{aligned}
\]
here we have used the formula that $B_{k}(1/2)=(2^{1-k}-1)B_k$ for every $k\geq 0$.
\end{proof}

\section{Special values of  $L$-functions}
Suppose $\chi$ is a Dirichlet character of conductor $k$. The $L$-fuction attached to $\chi$ is defined by
\[
L(s,\chi)=\sum_{n=1}^\infty \dfrac{\chi(n)}{n^s},\quad {\rm Re}(s)>1.
\]
The function $L(s,\chi)$ can be continued analytically to the whole complex plane, except for a simple pole at $s=1$ when $\chi=1$.
One has
\[
L(s,\chi)=\sum_{r=1}^k\chi(r) k^{-s} \zeta(s,\dfrac{r}{k}).
\]
We define
\[
S_{n,\chi}(x) =k^n\sum_{r=1}^k \chi(r) S_{n,r/k}(x+1-r/k).
\]
\begin{lem} If $\chi$ is nontrivial then 
$S_{n,\chi}(x)=-\dfrac{B_{n+1,\chi}}{n+1}$.
\end{lem}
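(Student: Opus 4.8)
The plan is to reduce the claim to Proposition~\ref{prop:existence} together with the standard expression of the generalized Bernoulli number $B_{n+1,\chi}$ in terms of Bernoulli polynomials. First I would substitute $a=r/k$ in Proposition~\ref{prop:existence}. Since the argument $x+1-r/k$ satisfies $(x+1-r/k)+(r/k)-1=x$, one gets
\[
S_{n,r/k}\!\left(x+1-\tfrac{r}{k}\right)=\frac{B_{n+1}(x)-B_{n+1}(r/k)}{n+1},
\]
so the defining formula for $S_{n,\chi}(x)$ collapses to
\[
S_{n,\chi}(x)=\frac{k^n}{n+1}\left(B_{n+1}(x)\sum_{r=1}^{k}\chi(r)-\sum_{r=1}^{k}\chi(r)B_{n+1}\!\left(\tfrac{r}{k}\right)\right).
\]

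Next I would use the hypothesis that $\chi$ is nontrivial, so that $\sum_{r=1}^{k}\chi(r)=0$. The term carrying $B_{n+1}(x)$ then vanishes and we are left with
\[
S_{n,\chi}(x)=-\frac{k^n}{n+1}\sum_{r=1}^{k}\chi(r)B_{n+1}\!\left(\tfrac{r}{k}\right),
\]
which is in particular a constant polynomial, as the statement requires.

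Finally I would invoke the classical identity $B_{m,\chi}=k^{m-1}\sum_{r=1}^{k}\chi(r)B_m(r/k)$ relating the generalized Bernoulli numbers to the Bernoulli polynomials. Taking $m=n+1$ gives $\sum_{r=1}^{k}\chi(r)B_{n+1}(r/k)=k^{-n}B_{n+1,\chi}$, and substituting this into the previous display yields exactly $S_{n,\chi}(x)=-B_{n+1,\chi}/(n+1)$. The only step that is not purely formal is this last identity; I would either cite it from a standard reference such as Washington's \emph{Introduction to Cyclotomic Fields}, or include its short generating-function proof: from $\sum_{m\ge 0}B_{m,\chi}\tfrac{t^m}{m!}=\sum_{r=1}^{k}\frac{\chi(r)te^{rt}}{e^{kt}-1}$, the substitution $u=kt$ in the defining series of the Bernoulli polynomials gives $\frac{te^{rt}}{e^{kt}-1}=\sum_{m\ge 0}k^{m-1}B_m(r/k)\tfrac{t^m}{m!}$, and comparing coefficients of $t^m$ produces the identity. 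I do not anticipate a genuine obstacle here; the one point that requires care is correctly tracking the shift $x+1-r/k$ in the definition of $S_{n,\chi}$, which is exactly what makes the argument of $B_{n+1}$ simplify to the clean value $x$.
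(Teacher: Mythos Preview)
Your proof is correct and follows essentially the same route as the paper: apply Proposition~\ref{prop:existence} with $a=r/k$ so that the shifted argument collapses to $B_{n+1}(x)$, use $\sum_{r=1}^k\chi(r)=0$ to kill the $x$-dependent term, and then recognize $k^n\sum_{r=1}^k\chi(r)B_{n+1}(r/k)=B_{n+1,\chi}$. Your write-up is in fact slightly more detailed than the paper's (you spell out the shift and offer a generating-function justification of the last identity), but the underlying argument is identical.
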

\begin{proof}
One has
\[
\begin{aligned}
S_{n,\chi}(x) &=k^n\sum_{r=1}^k \chi(r) S_{n,r/k}(x+1-r/k)\\
&= k^n\sum_{r=1}^k \chi(r) \dfrac{B_{n+1}(x)-B_{n+1}(r/k)}{n+1}\\
&=k^n\sum_{r=1}^k \chi(r) \dfrac{B_{n+1}(x)}{n+1} -k^n\sum_{r=1}^k \chi(r) \dfrac{B_{n+1}(r/k)}{n+1}
=-\dfrac{B_{n+1,\chi}}{n+1},
\end{aligned}
\]
here we used the formula that  $\displaystyle B_{n+1,\chi}=k^n\sum_{r=1}^k \chi(r) B_{n+1}(r/k)$ and that for non-trivial $\chi$, $\sum\limits_{r=1}^{k} \chi(r)=0$. 
\end{proof} 
\begin{cor} \label{cor: twisted_chi}
\[
L(-n,\chi) = \int_{0}^1 S_{n,\chi}(x)dx=-\dfrac{B_{n+1,\chi}}{n+1}.
\]
\end{cor}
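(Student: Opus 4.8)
The plan is to separate the two equalities in the corollary. The middle one, $\int_{0}^1 S_{n,\chi}(x)\,dx = -B_{n+1,\chi}/(n+1)$, is immediate from the Lemma just proved: for nontrivial $\chi$ the polynomial $S_{n,\chi}(x)$ is the \emph{constant} $-B_{n+1,\chi}/(n+1)$, so integrating it over $[0,1]$ does nothing. Thus the genuine content is the outer equality $L(-n,\chi) = \int_{0}^1 S_{n,\chi}(x)\,dx$, and I would prove this directly, i.e.\ without passing through the Lemma, so as to also cover the trivial character.

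First I would unwind the definition $S_{n,\chi}(x) = k^n\sum_{r=1}^k \chi(r) S_{n,r/k}(x+1-r/k)$, interchange the finite sum with the integral, and in the $r$-th summand perform the substitution $u = x+1-r/k$. As $x$ runs over $[0,1]$ this sends $u$ through $[1-r/k,\,2-r/k]$, which is exactly the interval $[1-a,2-a]$ of Theorem~\ref{thm:special value Hurwitz} with $a = r/k \in (0,1]$. Applying that theorem gives $\int_0^1 S_{n,r/k}(x+1-r/k)\,dx = \zeta(-n,r/k)$, hence
\[
\int_{0}^1 S_{n,\chi}(x)\,dx \;=\; k^n\sum_{r=1}^k \chi(r)\,\zeta(-n,r/k).
\]

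Next I would compare this with the decomposition $L(s,\chi) = \sum_{r=1}^k \chi(r)\,k^{-s}\,\zeta(s,r/k)$ recalled just before the Lemma: specializing at $s=-n$ (legitimate since each $\zeta(s,r/k)$ is holomorphic there) yields $L(-n,\chi) = k^n\sum_{r=1}^k \chi(r)\zeta(-n,r/k)$, matching the display above. Finally, to land on the closed form, substitute $\zeta(-n,r/k) = -B_{n+1}(r/k)/(n+1)$ from Theorem~\ref{Hurwitz1} and use $B_{n+1,\chi} = k^n\sum_{r=1}^k \chi(r) B_{n+1}(r/k)$; this recovers $-B_{n+1,\chi}/(n+1)$ and gives a self-contained argument, independent of the Lemma.

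I do not expect a real obstacle: this is a corollary and every ingredient is in place. The only care needed is bookkeeping — verifying that $u = x+1-r/k$ produces precisely the interval $[1-a,2-a]$ with $a = r/k$ in the admissible range $(0,1]$ so that Theorem~\ref{thm:special value Hurwitz} applies verbatim, and keeping the normalization of the generalized Bernoulli numbers $B_{n+1,\chi}$ consistent with the one fixed in the Lemma. Swapping the finite sum over $r$ with the integral is harmless.
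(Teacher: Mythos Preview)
Your argument is correct and coincides with the paper's own proof: both use the decomposition $L(s,\chi)=\sum_{r=1}^k\chi(r)k^{-s}\zeta(s,r/k)$ at $s=-n$, Theorem~\ref{thm:special value Hurwitz} for each $\zeta(-n,r/k)$, the substitution $u=x+1-r/k$ to move between $[0,1]$ and $[1-r/k,2-r/k]$, and the definition of $S_{n,\chi}$. The only cosmetic difference is direction---the paper starts from $L(-n,\chi)$ and arrives at $\int_0^1 S_{n,\chi}(x)\,dx$, while you start from the integral---and you add the (harmless) remark that the second equality also follows directly from Theorem~\ref{Hurwitz1} without invoking the Lemma.
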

\begin{proof} We have
\begin{align*}
L(-n,\chi) & =k^n \sum_{r=1}^k \chi(r)\zeta(-n,r/k)\\
&= k^n \sum_{r=1}^k \chi(r) \int_{1-r/k}^{2-r/k} S_{n,r/k}(x)dx\\
&= k^n \sum_{r=1}^k \chi(r) \int_{0}^{1} S_{n,r/k}(x+1-r/k)dx\\
&=    \int_{0}^{1} k^n\sum_{r=1}^k \chi(r) S_{n,r/k}(x+1-r/k)dx\\
&=\int_{0}^1  S_{n,\chi}(x)dx.
\qedhere
\end{align*}
\end{proof}

Recall that $\chi$ is a Dirichlet character of conductor $k$. We define $P_{n,\chi}(x)$ to be the unique polynomial such that for every  integer $M\geq 1$ one has
\[
P_{n,\chi}(M)=\sum_{r=1}^{Mk} \chi(r) r^{n}.
\]

\begin{lem}
\[
P_{n,\chi}(x)=\dfrac{B_{n+1,\chi}(xk)-B_{n+1,\chi}}{n+1}.
\]
\end{lem}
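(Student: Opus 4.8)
The plan is to prove the identity $P_{n,\chi}(x)=\dfrac{B_{n+1,\chi}(xk)-B_{n+1,\chi}}{n+1}$ by evaluating both sides at every positive integer $M$ and invoking uniqueness of interpolating polynomials, exactly as was done for $S_{n,a}(x)$ in Proposition~\ref{prop:existence}. So the real content is a finite computation of $\sum_{r=1}^{Mk}\chi(r)r^n$ in terms of generalized Bernoulli polynomials. First I would recall the definition of the generalized Bernoulli polynomials attached to $\chi$, namely $B_{n,\chi}(x)=k^{n-1}\sum_{r=1}^{k}\chi(r)B_n\!\left(\dfrac{x+r}{k}\right)$, together with the difference equation they inherit from the ordinary Bernoulli polynomials: differentiating or using $B_{m}(y+1)-B_m(y)=my^{m-1}$ one gets $B_{n+1,\chi}(x+k)-B_{n+1,\chi}(x)=(n+1)\sum_{r=1}^{k}\chi(r)(x+r)^n$, and in particular $B_{n+1,\chi}(0)=B_{n+1,\chi}$ matches the generalized Bernoulli number $B_{n+1,\chi}=k^n\sum_{r=1}^k\chi(r)B_{n+1}(r/k)$ used in the preceding lemma.

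Next I would set up a telescoping sum. Writing every integer between $1$ and $Mk$ as $r=jk+t$ with $0\le j\le M-1$ and $1\le t\le k$, and using $\chi(jk+t)=\chi(t)$, one has
\[
P_{n,\chi}(M)=\sum_{j=0}^{M-1}\sum_{t=1}^{k}\chi(t)(jk+t)^n=\sum_{j=0}^{M-1}\frac{B_{n+1,\chi}(jk+k)-B_{n+1,\chi}(jk)}{n+1},
\]
where the inner identity is precisely the difference equation for $B_{n+1,\chi}$ evaluated at $x=jk$. The sum over $j$ now telescopes to $\dfrac{B_{n+1,\chi}(Mk)-B_{n+1,\chi}(0)}{n+1}=\dfrac{B_{n+1,\chi}(Mk)-B_{n+1,\chi}}{n+1}$. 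Since this equals the value at $M$ of the polynomial on the right-hand side for every integer $M\ge 1$, and two polynomials agreeing at infinitely many points coincide, the claimed formula follows; this also establishes existence and rationality of $P_{n,\chi}(x)$, parallel to the remarks after Proposition~\ref{prop:existence}.

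The main obstacle, such as it is, is purely bookkeeping: I need the difference equation for $B_{n+1,\chi}$ in exactly the normalization being used in the paper, and I must be careful that the conductor-$k$ character satisfies $\chi(jk+t)=\chi(t)$ so the reindexing is legitimate. If the paper prefers to avoid generalized Bernoulli polynomials of a shifted argument, an alternative is to expand $(jk+t)^n$ binomially and reduce to the already-proved Proposition~\ref{prop:existence} for the polynomials $S_{n,t/k}$, much as in the second proof of Theorem~\ref{thm:special value Hurwitz}; but the telescoping argument above is cleaner and I would present that one.
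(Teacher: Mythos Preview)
Your proposal is correct and is essentially the same argument as the paper's: both verify the claimed polynomial identity by checking it at all integers $M$ via the sum formula $\sum_{r=1}^{Mk}\chi(r)r^n=\dfrac{B_{n+1,\chi}(Mk)-B_{n+1,\chi}}{n+1}$, then invoke uniqueness. The paper simply asserts this sum identity in one line, whereas you supply its justification via the difference equation $B_{n+1,\chi}(x+k)-B_{n+1,\chi}(x)=(n+1)\sum_{t=1}^{k}\chi(t)(x+t)^n$ and a telescoping sum, so your write-up is more detailed but not a different approach.
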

\begin{proof} For every  integer $M\geq 2$, one has
\[
P_{n,\chi}(M)= \sum_{r=0}^{Mk} \chi(r) r^n
= \dfrac{B_{n+1,\chi}(Mk)-B_{n+1,\chi}}{n+1}.
\qedhere
\]
\end{proof}
Recall that character $\chi$ is said to be even (respectively, odd) if $\chi(-1)=\chi(-1)$ (respectively, $\chi(-1)=-\chi(1)$).

\begin{lem} Suppose either $\chi$ is even and $n$ even or $\chi$ is odd and $n$ is odd. Then $B_{n,\chi}(k/2)=B_{n,\chi}(-k/2)$.
\end{lem}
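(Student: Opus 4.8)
The plan is to deduce the equality from a reflection identity for the generalized Bernoulli polynomials, namely
\[
B_{n,\chi}(-x) = (-1)^{n}\chi(-1)\, B_{n,\chi}(x),
\]
valid for any nontrivial Dirichlet character $\chi$ modulo $k$ (which we may assume, since the trivial character of conductor $1$ is degenerate here, and the rest of this section is phrased for nontrivial $\chi$ anyway). Granting this, both hypotheses force the sign $(-1)^{n}\chi(-1)$ to be $+1$: if $\chi$ is even and $n$ is even then $(-1)^{n}\chi(-1)=1\cdot 1$, and if $\chi$ is odd and $n$ is odd then $(-1)^{n}\chi(-1)=(-1)(-1)$. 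Hence $B_{n,\chi}(-x)=B_{n,\chi}(x)$, that is, $B_{n,\chi}$ is an even polynomial, and evaluating at $x=k/2$ yields exactly $B_{n,\chi}(k/2)=B_{n,\chi}(-k/2)$.

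To prove the reflection identity I would start from the expansion
\[
B_{n,\chi}(x)=k^{\,n-1}\sum_{r=1}^{k}\chi(r)\,B_{n}\!\Big(\tfrac{x+r}{k}\Big),
\]
which is the polynomial identity underlying the formula for $P_{n,\chi}(x)$ established above and which reduces at $x=0$ to the definition of $B_{n,\chi}$. Replacing $x$ by $-x$, writing $\tfrac{r-x}{k}=1-\tfrac{(k-r)+x}{k}$, and invoking the classical symmetry $B_{n}(1-u)=(-1)^{n}B_{n}(u)$ recalled in the Introduction gives
\[
B_{n,\chi}(-x)=(-1)^{n}\,k^{\,n-1}\sum_{r=1}^{k}\chi(r)\,B_{n}\!\Big(\tfrac{(k-r)+x}{k}\Big).
\]
Now substitute $r\mapsto k-r$, which merely permutes a complete residue system modulo $k$, and use $\chi(k-r)=\chi(-r)=\chi(-1)\chi(r)$ together with the fact that the terms with $k\mid r$ contribute $0$; the right-hand side then collapses to $(-1)^{n}\chi(-1)\,B_{n,\chi}(x)$.

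I expect no genuine obstacle: the computation is short and essentially formal, and the only delicate point is the index bookkeeping in the substitution $r\mapsto k-r$ together with the vanishing of the boundary terms $\chi(0)=\chi(k)=0$. If one prefers to avoid the explicit expansion, the same conclusion can be reached from the difference relation $B_{n,\chi}(x+k)-B_{n,\chi}(x)=n\sum_{r=1}^{k}\chi(r)(x+r)^{n-1}$: under the hypothesis the polynomial $B_{n,\chi}(x)-B_{n,\chi}(-x)$ becomes $k$-periodic, hence constant, hence equal to its value $0$ at $x=0$.
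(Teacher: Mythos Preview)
Your argument is correct and in fact proves more than the lemma asks: you establish the full reflection identity $B_{n,\chi}(-x)=(-1)^n\chi(-1)\,B_{n,\chi}(x)$, so that under either hypothesis $B_{n,\chi}$ is an even polynomial. The paper takes a more localized route: it invokes the difference relation $B_{n,\chi}(x+k)-B_{n,\chi}(x)=n\sum_{r=1}^k\chi(r)(r+x)^{n-1}$, evaluates it at the single point $x=-k/2$, and then shows that the resulting sum $n\sum_{r=1}^k\chi(r)(r-k/2)^{n-1}$ vanishes by pairing $r$ with $k-r$. Your alternative sketch at the end is essentially the paper's computation carried out for general $x$ rather than just at $x=-k/2$. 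The payoff of your main approach is a clean structural statement (evenness of $B_{n,\chi}$) that immediately implies the lemma, at the modest cost of appealing to the expansion $B_{n,\chi}(x)=k^{n-1}\sum_{r=1}^k\chi(r)B_n\big(\tfrac{x+r}{k}\big)$ and the classical symmetry $B_n(1-u)=(-1)^nB_n(u)$; the paper's route needs only the difference relation and a single pairing, but yields only the pointwise equality required for the next proposition.
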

\begin{proof} For any $x$, one has
\[
B_{n,\chi}(x+k)-B_{n,\chi}(x)= n\sum_{r=1}^k\chi(r)(r+x)^{n-1}.
\]
By substituting $x=-k/2$, one obtains
\[
B_{n,\chi}(\dfrac{k}{2})-B_{n,\chi}(-\dfrac{k}{2})= n\sum_{r=1}^k\chi(r)(r-\dfrac{k}{2})^{n-1}.
\]
We treat the case that $\chi$ is even and $n$ is even. The other case can be treated similarly.
Since $\chi$ is even and $n$ is even, for each $r$ coprime to $k$, one has
\[
\begin{aligned}
\chi(r) (r-\dfrac{k}{2})^{n-1}+\chi(k-r)(\dfrac{k}{2}-r)^{n-1}&=\chi(r) (r-\dfrac{k}{2})^{n-1}+\chi(-r)(\dfrac{k}{2}-r)^{n-1}\\
&= (\chi(r)-\chi(-r)) (r-\dfrac{k}{2})^{n-1}=0.
\end{aligned}
\]
Thus 
\[
B_{n,\chi}(\dfrac{k}{2})-B_{n,\chi}(-\dfrac{k}{2})= n\sum_{r=1}^k\chi(r)(r-\dfrac{k}{2})^{n-1}=0.
\qedhere
\]
\end{proof}
\begin{prop} Suppose either $\chi$ is even and $n$ even or $\chi$ is odd and $n$ is odd. We have
\[
\int_{-1/2}^{1/2} P_{n,\chi}(x)dx = L(-n,\chi).
\]
\end{prop}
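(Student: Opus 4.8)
The plan is to reduce everything to the explicit description of $P_{n,\chi}$ as a generalized Bernoulli polynomial and then to invoke the preceding lemma on the symmetry $B_{m,\chi}(k/2)=B_{m,\chi}(-k/2)$. By the lemma expressing $P_{n,\chi}$, one has $P_{n,\chi}(x)=\dfrac{B_{n+1,\chi}(xk)-B_{n+1,\chi}}{n+1}$, so, since the interval $[-1/2,1/2]$ has length $1$,
\[
\int_{-1/2}^{1/2} P_{n,\chi}(x)\,dx=\frac{1}{n+1}\int_{-1/2}^{1/2} B_{n+1,\chi}(xk)\,dx-\frac{B_{n+1,\chi}}{n+1}.
\]
By Corollary~\ref{cor: twisted_chi} we already know $L(-n,\chi)=-\dfrac{B_{n+1,\chi}}{n+1}$, so it remains only to show that $\displaystyle\int_{-1/2}^{1/2} B_{n+1,\chi}(xk)\,dx=0$.

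To do this I would substitute $u=xk$ and use the antiderivative relation $B_{m,\chi}'(u)=m\,B_{m-1,\chi}(u)$, which follows from $B_m'(x)=mB_{m-1}(x)$ together with the standard identity $B_{m,\chi}(x)=k^{m-1}\sum_{r=1}^{k}\chi(r)B_m\!\big(\tfrac{x+r}{k}\big)$ (which is compatible at $x=0$ with the formula $B_{n+1,\chi}=k^{n}\sum_{r=1}^{k}\chi(r)B_{n+1}(r/k)$ used above). This yields
\[
\int_{-1/2}^{1/2} B_{n+1,\chi}(xk)\,dx=\frac{1}{k}\int_{-k/2}^{k/2} B_{n+1,\chi}(u)\,du=\frac{B_{n+2,\chi}(k/2)-B_{n+2,\chi}(-k/2)}{k(n+2)}.
\]
Then I would apply the previous lemma with $m=n+2$: under either hypothesis of the proposition ($\chi$ even and $n$ even, whence $m$ is even; or $\chi$ odd and $n$ odd, whence $m$ is odd) one gets $B_{n+2,\chi}(k/2)=B_{n+2,\chi}(-k/2)$, so the integral vanishes. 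Substituting back into the first display and using Corollary~\ref{cor: twisted_chi} gives $\int_{-1/2}^{1/2}P_{n,\chi}(x)\,dx=-\dfrac{B_{n+1,\chi}}{n+1}=L(-n,\chi)$.

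The computation is routine; the only points requiring a moment of care are checking that the parity hypotheses still match the lemma after the index shift $n\mapsto n+2$, and recording the derivative (equivalently, antiderivative) property of generalized Bernoulli polynomials, which is standard but not spelled out above. I do not expect any genuine obstacle here.
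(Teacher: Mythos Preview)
Your proposal is correct and follows essentially the same route as the paper: substitute the explicit formula for $P_{n,\chi}$, change variables $u=xk$, use the antiderivative relation $B_{m,\chi}'(u)=mB_{m-1,\chi}(u)$ to evaluate the integral as $\dfrac{B_{n+2,\chi}(k/2)-B_{n+2,\chi}(-k/2)}{k(n+1)(n+2)}-\dfrac{B_{n+1,\chi}}{n+1}$, and then invoke the preceding lemma (with the index shifted by $2$, preserving parity) to kill the first term. Your extra remarks about checking the parity after the shift $n\mapsto n+2$ and justifying the derivative identity are the only places you go beyond the paper, which simply uses both facts without comment.
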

\begin{proof} One has
\begin{align*}
\int_{-1/2}^{1/2} P_{n,\chi}(x)dx&= \int_{-1/2}^{1/2} \dfrac{B_{n+1,\chi}(xk)-B_{n+1,\chi}}{n+1}dx\\
&=  \int_{-k/2}^{k/2} \dfrac{B_{n+1,\chi}(y)dy }{k(n+1)}  -\dfrac{B_{n+1,\chi}}{n+1}\\
&= \dfrac{1}{k(n+1)}  \dfrac{B_{n+2,\chi}(y)}{n+2}\bigg\rvert_{-k/2}^{k/2} -\dfrac{B_{n+1,\chi}}{n+1}\\
&= \dfrac{B_{n+2,\chi}(k/2)-B_{n+2,\chi}(-k/2) }{k(n+1)(n+2)} - \dfrac{B_{n+1,\chi}}{n+1}\\
&=L(-n,\chi).
\qedhere
\end{align*}

\end{proof}

%%%%%%%%%%%%%%%%%%%%%%%%%%%%%%%%%%%%%%%%%%%%%%%%%%%%%%%%%%

\section{Special values of twisted $L$-functions}
Suppose $\chi$ is a Dirichlet character of conductor $k$. For each $0< a \leq 1$, the $L$-fuction attached to $\chi$ is defined by
\[
L(s,a,\chi)=\sum_{n=0}^\infty \dfrac{\chi(n+1)}{(n+a)^s},\quad {\rm Re}(s)>1.
\]
Note that when $a=1$, this definition agrees with the classical $L$-function associated with $\chi$.

We have 
\begin{align*}
L(s,a, \chi) &=\displaystyle{\sum_{n=0}^\infty \dfrac{\chi(n+1)}{(n+a)^s}}\\
	          &=\sum_{r=0}^{k-1} \left(\sum_{n \equiv r (k) }^{\infty} \frac{\chi(n+1)}{(n+a)^s} \right)\\
		 &=\sum_{r=0}^{k-1} \left(\sum_{m=0}^{\infty} \frac{\chi(mk+r+1)}{(mk+r+a)^s} \right)\\
		 &=\sum_{r=0}^{k-1} \chi(r+1)k^{-s} \left(\sum_{m=0}^{\infty} \frac{1}{(m+\frac{r+a}{k})^s} \right)\\
		 &=\sum_{r=0}^{k-1} \chi(r)k^{-s} \zeta(s, \frac{r+a}{k})\\
		 &=\sum_{r=1}^{k} \chi(r)k^{-s} \zeta(s, \frac{r+a-1}{k}).
\end{align*} 
From this expression, we can see that the function $L(s, a, \chi)$ can be continued analytically to the whole complex plane, except for a simple pole at $s=1$ when $\chi=1$.
We define
\[
S_{n,a, \chi}(x) =k^n\sum_{r=1}^k \chi(r) S_{n,\frac{r+a-1}{k}}(x+1-\frac{r+a-1}{k}).
\]
We also define the following modified generalized Bernoulli polynomials $\tilde{B}_{n, \chi}(x)$
\[ \tilde{B}_{n,\chi}(x)=B_{n,\chi}(x-1), \]
where $B_{n,\chi}(x)$ is the generalized Bernoulli polynomial defined in \cite[Equation 1.2.7]{[Dilcher]}. Our justification for this modified definition is that when $\chi$ is the trivial character, we have $\tilde{B}_{n,\chi}(x)=B_{n}(x)$. We have the following lemma.

\begin{lem} If $\chi$ is nontrivial then 
$S_{n,a, \chi}(x)=-\dfrac{\tilde{B}_{n+1,\chi}(a)}{n+1}.$
\end{lem}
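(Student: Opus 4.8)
The plan is to mimic exactly the proof of the earlier lemma for $S_{n,\chi}(x)$, since the present lemma is its natural ``twisted'' analogue. First I would unwind the definition
\[
S_{n,a,\chi}(x)=k^n\sum_{r=1}^k \chi(r)\, S_{n,\frac{r+a-1}{k}}\!\left(x+1-\frac{r+a-1}{k}\right),
\]
and apply Proposition~\ref{prop:existence} to each inner term with parameter $\alpha=\frac{r+a-1}{k}$. Since $S_{n,\alpha}(y)=\frac{B_{n+1}(y+\alpha-1)-B_{n+1}(\alpha)}{n+1}$, plugging in $y=x+1-\alpha$ gives $y+\alpha-1=x$, so the $\alpha$-dependence collapses on the first Bernoulli term and we obtain
\[
S_{n,a,\chi}(x)=\frac{k^n}{n+1}\sum_{r=1}^k \chi(r)\Big(B_{n+1}(x)-B_{n+1}\!\big(\tfrac{r+a-1}{k}\big)\Big).
\]

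Next I would split the sum. The term involving $B_{n+1}(x)$ factors as $\frac{k^n B_{n+1}(x)}{n+1}\sum_{r=1}^k\chi(r)$, which vanishes because $\chi$ is nontrivial and hence $\sum_{r=1}^k\chi(r)=0$. So we are left with
\[
S_{n,a,\chi}(x)=-\frac{k^n}{n+1}\sum_{r=1}^k \chi(r)\, B_{n+1}\!\left(\frac{r+a-1}{k}\right).
\]
The remaining step is to recognize this sum as the (modified) generalized Bernoulli polynomial. The key identity I would invoke is the standard multiplication/expansion formula for generalized Bernoulli polynomials, namely
\[
B_{n+1,\chi}(y)=k^n\sum_{r=1}^k \chi(r)\, B_{n+1}\!\left(\frac{y+r}{k}\right),
\]
from \cite[Equation 1.2.7]{[Dilcher]} (this is the polynomial-valued refinement of the formula $B_{n+1,\chi}=k^n\sum_{r=1}^k\chi(r)B_{n+1}(r/k)$ already used earlier in the excerpt, which is the case $y=0$). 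Setting $y=a-1$ turns the argument $\frac{y+r}{k}$ into $\frac{r+a-1}{k}$, so the sum equals $B_{n+1,\chi}(a-1)=\tilde B_{n+1,\chi}(a)$ by the definition of the modified polynomial. Therefore $S_{n,a,\chi}(x)=-\dfrac{\tilde B_{n+1,\chi}(a)}{n+1}$, as claimed.

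The only genuine obstacle is bookkeeping with the argument conventions: one must make sure the normalization of $B_{n,\chi}(x)$ in \cite{[Dilcher]} (Equation 1.2.7) really is the one with $\frac{y+r}{k}$ inside $B_{n+1}$ and the factor $k^n$ outside, and that the shift by $a-1$ lands correctly so that $\tilde B_{n+1,\chi}(a)=B_{n+1,\chi}(a-1)$ matches. Once the conventions are pinned down, everything else is the same two-line manipulation as in the untwisted lemma — peel off the $B_{n+1}(x)$ term using $\sum\chi(r)=0$, and identify what remains with the generalized Bernoulli polynomial. No nontrivial analytic input is needed here; the analytic content (the relation $L(-n,a,\chi)=\int_0^1 S_{n,a,\chi}(x)\,dx$) presumably comes in the following corollary, exactly as Corollary~\ref{cor: twisted_chi} followed its lemma.
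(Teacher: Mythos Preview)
Your proof is correct and follows essentially the same route as the paper: apply Proposition~\ref{prop:existence} so that the shifted argument collapses to $B_{n+1}(x)$, kill that term via $\sum_{r=1}^k\chi(r)=0$, and identify the remaining sum with the (modified) generalized Bernoulli polynomial. The only cosmetic difference is that the paper quotes the identity $\tilde B_{n+1,\chi}(a)=k^n\sum_{r=1}^k\chi(r)B_{n+1}\!\big(\tfrac{r+a-1}{k}\big)$ directly from \cite[Equation~1.2.14]{[Dilcher]}, whereas you obtain it by substituting $y=a-1$ into the formula for $B_{n+1,\chi}(y)$ and then invoking the definition $\tilde B_{n+1,\chi}(a)=B_{n+1,\chi}(a-1)$.
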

\begin{proof}
One has
\[
\begin{aligned}
S_{n,a,\chi}(x) &=k^n\sum_{r=1}^k \chi(r) S_{n,\frac{r+a-1}{k}}(x+1-\frac{r+a-1}{k})\\
&= k^n\sum_{r=1}^k \chi(r) \dfrac{B_{n+1}(x)-B_{n+1}(\frac{r+a-1}{k})}{n+1}\\
&=k^n\sum_{r=1}^k \chi(r) \dfrac{B_{n+1}(x)}{n+1} -k^n\sum_{r=1}^k \chi(r) \dfrac{B_{n+1}(\frac{r+a-1}{k})}{n+1}
=-\dfrac{\tilde{B}_{n+1,\chi}(a)}{n+1},
\end{aligned}
\]
here we used the formula that  $\tilde{B}_{n+1,\chi}(a)=k^n\sum\limits_{r=1}^k \chi(r) B_{n+1}(\frac{r+a-1}{k})$ (see \cite[Equation 1.2.14]{[Dilcher]}).
\end{proof} 
%begin{rmk}
%Our definition of $B_{n+1, \chi}(x)$ is a little bit different from the one given in \cite[formula 1.2.14]{[Dilcher]} by a shift $1$. We prefer our definition  because when $\chi=1$, we have $B_{n+1,\chi}(x)=B_{n+1}(x)$. While when we would use the definition in Dilcher's book, we would obtain $B_{n+1,\chi}(x)=B_{n+1}(x+1)$.
%\end{rmk} 

\begin{cor} \label{zeta_value_1} Let $\chi$ be a non-trivial primitive Dirichlet character. We have 
\[
L(-n,a, \chi) = \int_{0}^1 S_{n,\chi}(x)dx=-\dfrac{\tilde{B}_{n+1,\chi}(a)}{n+1}.
\]
\end{cor}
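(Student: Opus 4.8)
The plan is to mirror exactly the structure of the proof of Corollary~\ref{cor: twisted_chi}, replacing $\zeta(-n,r/k)$ with $\zeta(-n,(r+a-1)/k)$ and using Theorem~\ref{thm:special value Hurwitz} in place of its $a=1$ specialization. First I would start from the decomposition
\[
L(s,a,\chi)=\sum_{r=1}^{k}\chi(r)k^{-s}\zeta\!\left(s,\tfrac{r+a-1}{k}\right),
\]
which was established above, and substitute $s=-n$ to get $L(-n,a,\chi)=k^{n}\sum_{r=1}^{k}\chi(r)\zeta(-n,(r+a-1)/k)$. The one point requiring a small check is that the analytic continuation of $L(s,a,\chi)$ is compatible with plugging $s=-n$ termwise into this finite sum: since each $\zeta(s,(r+a-1)/k)$ is meromorphic with its only pole at $s=1$ and $-n\neq 1$, and since for $\chi$ nontrivial the potential pole contributions at $s=1$ cancel (or simply: $-n$ is a regular point of every term), this is immediate.

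Next I would apply Theorem~\ref{thm:special value Hurwitz} with the parameter $(r+a-1)/k \in (0,1]$ to rewrite each Hurwitz value as an integral:
\[
\zeta\!\left(-n,\tfrac{r+a-1}{k}\right)=\int_{1-\frac{r+a-1}{k}}^{2-\frac{r+a-1}{k}} S_{n,\frac{r+a-1}{k}}(x)\,dx
=\int_{0}^{1} S_{n,\frac{r+a-1}{k}}\!\left(x+1-\tfrac{r+a-1}{k}\right)dx,
\]
the second equality being the change of variables shifting the interval of integration to $[0,1]$. Then I would pull the finite sum and the factor $k^{n}$ inside the integral, obtaining
\[
L(-n,a,\chi)=\int_{0}^{1} k^{n}\sum_{r=1}^{k}\chi(r)\,S_{n,\frac{r+a-1}{k}}\!\left(x+1-\tfrac{r+a-1}{k}\right)dx
=\int_{0}^{1} S_{n,a,\chi}(x)\,dx,
\]
recognizing the integrand as the polynomial $S_{n,a,\chi}(x)$ by its definition. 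Finally, the preceding Lemma states that for nontrivial $\chi$ one has $S_{n,a,\chi}(x)=-\tilde B_{n+1,\chi}(a)/(n+1)$ identically in $x$, so the integral over $[0,1]$ of this constant is just the constant itself, giving $-\tilde B_{n+1,\chi}(a)/(n+1)$ and completing all three equalities.

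I do not expect any genuine obstacle here: every ingredient is already in place, and the argument is a routine adaptation. The only things to be careful about are bookkeeping ones — making sure the shift $x\mapsto x+1-(r+a-1)/k$ is applied consistently so that the integration range really is $[0,1]$, and noting that the hypothesis ``$\chi$ non-trivial primitive'' is used only through the cited Lemma (primitivity ensures $\tilde B_{n+1,\chi}$ is the standard generalized Bernoulli polynomial, and non-triviality is what makes $S_{n,a,\chi}$ constant). If one wanted to avoid invoking the Lemma, the identity $L(-n,a,\chi)=-\tilde B_{n+1,\chi}(a)/(n+1)$ could alternatively be read off directly from $\zeta(-n,b)=-B_{n+1}(b)/(n+1)$ (Theorem~\ref{Hurwitz1}) together with the formula $\tilde B_{n+1,\chi}(a)=k^{n}\sum_{r}\chi(r)B_{n+1}((r+a-1)/k)$, which is the cleaner route for that last equality.
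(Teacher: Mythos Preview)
Your proposal is correct and follows essentially the same approach as the paper's own proof: decompose $L(-n,a,\chi)$ as $k^n\sum_{r}\chi(r)\zeta(-n,(r+a-1)/k)$, apply Theorem~\ref{thm:special value Hurwitz} to each term, shift each integral to $[0,1]$, and recognize the integrand as $S_{n,a,\chi}(x)$. The paper omits your remarks on why termwise evaluation at $s=-n$ is legitimate and your explicit invocation of the preceding Lemma for the final equality, but the argument is identical in substance.
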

\begin{proof} 
We have
\[
\begin{aligned}
L(-n,a, \chi) & =k^n \sum_{r=1}^k \chi(r)\zeta(-n,\frac{r+a-1}{k})\\
&= k^n \sum_{r=1}^k \chi(r) \int_{1-\frac{r+a-1}{k}}^{2-\frac{r+a-1}{k}} S_{n,\frac{r+a-1}{k}}(x)dx\\
&= k^n \sum_{r=1}^k \chi(r) \int_{0}^{1} S_{n,\frac{r+a-1}{k}}(x+1-\frac{r+a-1}{k})dx\\
&=    \int_{0}^{1} k^n\sum_{r=1}^k \chi(r) S_{n,\frac{r+a-1}{k}}(x+1-\frac{r+a-1}{k})dx\\
&=\int_{0}^1  S_{n,\chi,a}(x)dx.
\end{aligned}
\]
\end{proof}
\section{ The zeta function $L(s, \chi_{4})$} 
In this section, we use the studies in the previous sections to  study the zeta function $L(s,\chi_{4})$ where $\chi_{4}$ is the quadratic character of conductor $4$ given by 
\[ \chi(m) = \begin{cases} 0 &\mbox{if } 2|m   \\
1 & \mbox{if } m \equiv 1 \pmod{4} \\ 
-1 &\mbox{if }  m \equiv 3 \pmod{4}. \end{cases} \] 
\subsection{The first integral representation of $L(-n,\chi_4)$} 
By Corollary \ref{cor: twisted_chi}, we have 
\[ L(-2n, \chi_4)=-\frac{B_{2n+1, \chi_4}}{2n+1}= -\frac{4^{2n}}{2n+1} \left(B_{2n+1}\left(\frac{1}{4} \right)-B_{2n+1}\left(\frac{3}{4} \right) \right).\] 
By \cite[page 7, formula (2.8)]{[Rademacher]}), we have $B_{n}(x)=(-1)^{n} B_{n}(1-x)$, we have 
\[ B_{2n+1}\left(\frac{3}{4} \right)=-B_{2n+1}\left(\frac{1}{4} \right) .\] 
Therefore, we have 
\[ L(-2n, \chi_4)=-\frac{4^{2n+1} B_{2n+1} \left(\frac{1}{4} \right)}{2(2n+1)} .\] 
By Theorem \ref{Hurwitz1}, we know that 
\[ \zeta(-2n, \frac{1}{4})=-\frac{B_{2n+1} \left(\frac{1}{4} \right)}{2n+1} .\] 
Combining these two facts we have 
\[ L(-2n, \chi_4)=\frac{4^{2n+1}}{2} \zeta(-2n, \frac{1}{4}) .\] 
By Theorem \ref{thm:special value Hurwitz}, we have the following 
\begin{prop} \label{first_integral_chi_4} 
Let $n$ be a positive integer. Then 
\begin{enumerate}
\item If $n$ is odd then $L(-n, \chi_4)=0$. 
\item If $n$ is even then 
\[ L(-n, \chi_4)=\frac{4^{n+1}}{2} \int_{\frac{3}{4}}^{\frac{7}{4}} S_{n, \frac{1}{4}}(x)dx .\] 
\end{enumerate} 
\end{prop}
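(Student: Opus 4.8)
The plan is to derive Proposition~\ref{first_integral_chi_4} purely from the chain of identities already assembled just before its statement, treating it as a clean repackaging rather than a fresh computation. The key facts available are: the formula $L(-n,\chi_4) = -B_{n+1,\chi_4}/(n+1)$ from Corollary~\ref{cor: twisted_chi}, the symmetry $B_{m}(x) = (-1)^m B_m(1-x)$ which gives $B_{2n+1}(3/4) = -B_{2n+1}(1/4)$, the consequent identity $L(-2n,\chi_4) = \tfrac{4^{2n+1}}{2}\,\zeta(-2n,\tfrac14)$, and Theorem~\ref{thm:special value Hurwitz}, which says $\zeta(-m,a) = \int_{1-a}^{2-a} S_{m,a}(x)\,dx$ for every $m \geq 0$ and $0 < a \leq 1$.

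First I would dispose of part~(1): when $n$ is odd, say $n = 2j-1$, the generalized Bernoulli number $B_{n+1,\chi_4} = B_{2j,\chi_4}$ attached to the \emph{odd} character $\chi_4$ vanishes, because $B_{m,\chi}$ vanishes whenever the parity of $m$ disagrees with the parity of $\chi$ (the standard fact that $B_{m,\chi} = 0$ unless $\chi(-1) = (-1)^m$). Alternatively, and more in the spirit of the excerpt, one writes $B_{2j,\chi_4} = 4^{2j-1}\bigl(B_{2j}(1/4) - B_{2j}(3/4)\bigr)$ and invokes $B_{2j}(3/4) = B_{2j}(1/4)$, which follows from $B_{m}(x) = (-1)^m B_m(1-x)$ with $m$ even; hence the bracket is zero and $L(-n,\chi_4) = -B_{2j,\chi_4}/(2j) = 0$. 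Either route is a one-line argument.

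Next, for part~(2), with $n$ even I would simply combine the displayed identity $L(-n,\chi_4) = \tfrac{4^{n+1}}{2}\,\zeta(-n,\tfrac14)$ — which is exactly the even case of the computation carried out immediately above the proposition, written there for $-2n$ but valid for any even negative-integer argument — with Theorem~\ref{thm:special value Hurwitz} applied at $a = 1/4$:
\[
\zeta(-n,\tfrac14) = \int_{1 - 1/4}^{2 - 1/4} S_{n,1/4}(x)\,dx = \int_{3/4}^{7/4} S_{n,1/4}(x)\,dx.
\]
Multiplying by $4^{n+1}/2$ yields the claimed formula. The only point requiring a word of care is the indexing: the preamble establishes the relation for the argument $-2n$, so I would note explicitly that the same derivation (Corollary~\ref{cor: twisted_chi}, the reflection formula for Bernoulli polynomials, and Theorem~\ref{Hurwitz1}) goes through verbatim for any even positive integer in place of $2n$, since evenness is all that was used.

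I do not anticipate a genuine obstacle here; the substance of the proposition lives entirely in the lemmas and the pre-statement computation. The one place a careless reader could stumble is conflating the two parity conditions — the vanishing in part~(1) comes from the mismatch between the odd character $\chi_4$ and the even subscript $n+1$, whereas the nonvanishing integral in part~(2) is exactly the matched case — so I would make sure the write-up keeps the parity bookkeeping transparent, perhaps by stating the reflection identity $B_m(x) = (-1)^m B_m(1-x)$ once and applying it with $m = n+1$ in both parts.
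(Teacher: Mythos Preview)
Your proposal is correct and follows essentially the same route as the paper: the paper's ``proof'' is precisely the displayed chain of identities (Corollary~\ref{cor: twisted_chi}, the reflection formula $B_m(1-x)=(-1)^mB_m(x)$, Theorem~\ref{Hurwitz1}, then Theorem~\ref{thm:special value Hurwitz} at $a=1/4$) placed immediately before the proposition, with the proposition serving as a summary. If anything, you are more explicit than the paper about part~(1), which the paper leaves tacit; your parity argument via $B_{2j}(1/4)=B_{2j}(3/4)$ is exactly the right complement.
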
 
\begin{examples}
Let us give a concrete example for Proposition \ref{first_integral_chi_4}. By Example \ref{S_a}, we have
\[ S_{2,\frac{1}{4}}(x)=\frac{1}{16}(x-1)+\frac{1}{4}(x-1)(x-2)+  \frac{(x-1)(x-2)(2x-3)}{6}.  \]
Therefore, by the above proposition we have 
\[ L(-2, \chi_4)=\frac{4^3}{2} \int_{\frac{3}{4}}^{\frac{7}{4}} \left[\frac{1}{16}(x-1)+\frac{1}{4}(x-1)(x-2)+  \frac{(x-1)(x-2)(2x-3)}{6} \right]dx=-\frac{1}{2}.  \]

\end{examples} 

\subsection{The second integral representation of $L(-n, \chi_4)$}
In this subsection, we provide another integral representation for $L(-n, \chi_4)$. Our presentation here is strongly influenced by \cite{[Shimura1]}. First, let us recall the Lerch zeta function and its special values studied in \cite{[Shimura1]}. \label{thm: Lerch}
\[\zeta(s, a, \gamma)=\sum_{n=0}^{\infty} \frac{\gamma^n}{(n+a)^s}, \]
where $0<a \in \RR$ and $0<|\gamma| \leq 1$. When $\gamma=\exp(-2 \pi i \alpha)$ with $\alpha \in \RR \backslash \ZZ$,  we have the following theorem. 
\begin{thm} \cite[Theorem 0.2]{[Shimura1]} 
For $0<k \in \ZZ$, $\Re(a)>0$, the value $\zeta(1-k, a, \gamma)$ is given by 
\[ \zeta(1-k, a, \gamma)=E_{c,k-1}(a)/(1+c^{-1}), \]
where $c=\gamma^{-1}$ and $E_{c,k-1}(t)$ is the Euler polynomial defined by \cite[Equation 0.3]{[Shimura1]}. 
\end{thm}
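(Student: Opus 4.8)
The plan is to prove the formula by an elementary route that runs parallel to the difference‑equation arguments used for $S_{n,a}(x)$ earlier in this paper, reducing $\zeta(1-k,a,\gamma)$ to an honestly convergent sum and then evaluating that sum; afterwards I will indicate how the same answer drops out of the classical Mellin–Hankel machinery. First I would dispose of the case $0<|\gamma|<1$: here the Dirichlet series $\sum_{n\ge 0}\gamma^{n}(n+a)^{-s}$ converges locally uniformly for \emph{every} $s\in\CC$, since $|\gamma|^{n}$ dominates any power of $n+a$, so it is entire and must coincide with $\zeta(s,a,\gamma)$; in particular $\zeta(1-k,a,\gamma)=\sum_{n\ge 0}\gamma^{n}(n+a)^{k-1}$.

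Next I would evaluate this sum. Because $\gamma\neq 1$, the twisted difference equation $\gamma R(x+1)-R(x)=(x+a)^{k-1}$ has a unique polynomial solution $R$, necessarily of degree $k-1$ (a nonzero polynomial with $\gamma P(x+1)=P(x)$ would force $\gamma=1$ on comparing leading coefficients). For this $R$ one checks at once, by matching first differences in $M$ and the value at $M=0$, that $\sum_{n=0}^{M-1}\gamma^{n}(n+a)^{k-1}=\gamma^{M}R(M)-R(0)$ for every integer $M\ge 1$; letting $M\to\infty$ and using $|\gamma|<1$ gives $\zeta(1-k,a,\gamma)=-R(0)$. It then remains to identify $R$. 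Writing $c=\gamma^{-1}$ and $\tilde R(y)=R(y-a)$, the equation becomes $\tilde R(y+1)-c\,\tilde R(y)=c\,y^{k-1}$, which I would compare with the recursion (equivalently, the generating function) defining Shimura's generalized Euler polynomials $E_{c,n}$ in \cite[Equation 0.3]{[Shimura1]}; uniqueness of the polynomial solution then forces $\tilde R(y)=-\tfrac{c}{c+1}E_{c,k-1}(y)$, so $R(0)=-\tfrac{c}{c+1}E_{c,k-1}(a)$ and
\[
\zeta(1-k,a,\gamma)=-R(0)=\frac{c}{c+1}\,E_{c,k-1}(a)=\frac{E_{c,k-1}(a)}{1+c^{-1}},
\]
which is the claim for $0<|\gamma|<1$.

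Finally I would extend to $\gamma=\exp(-2\pi\ii\alpha)$, $\alpha\in\RR\setminus\ZZ$, by continuity in $\gamma$ on $\{0<|\gamma|\le 1\}\setminus\{1\}$: the left‑hand side is continuous there because $\Gamma(s)\zeta(s,a,\gamma)=\int_{0}^{\infty}t^{s-1}e^{-at}(1-\gamma e^{-t})^{-1}\,\dd t$ for $\mathrm{Re}\,s>0$ and this, together with a Hankel‑contour continuation in $s$, depends continuously on $\gamma$; the right‑hand side is continuous because $E_{c,k-1}(a)$ vanishes to first order as $c\to-1$, so the apparent singularity at $\gamma=-1$ is removable. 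Since the two sides already agree on $\{0<|\gamma|<1\}$ by the previous step, they agree throughout. For a second, purely analytic proof (valid uniformly in $\gamma\neq 1$) I would start from the same Mellin representation, deform it onto a Hankel contour of radius less than $2\pi\,\mathrm{dist}(\alpha,\ZZ)$ — legitimate precisely because $\alpha\notin\ZZ$ keeps the poles of $(1-\gamma e^{-z})^{-1}$, located at $z\in 2\pi\ii(\ZZ-\alpha)$, off the contour — observe that at $s=1-k$ the factor $z^{s-1}=z^{-k}$ is single valued so the contour collapses to a small circle about $0$, and read off that the residue there is the coefficient of $z^{k-1}$ in $e^{-az}(1-\gamma e^{-z})^{-1}$, which the generating function identifies with $E_{c,k-1}(a)/\bigl((k-1)!\,(1+c^{-1})\bigr)$.

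The conceptual content here is light; the real work is bookkeeping. I expect the main obstacle to be pinning down the exact normalization of Shimura's generalized Euler polynomials so that the constant genuinely comes out as $1/(1+c^{-1})$ and not some variant — if \cite[Equation 0.3]{[Shimura1]} uses a slightly different convention, the symmetry relation for $E_{c,n}$ (the analogue of $E_{n}(1-x)=(-1)^{n}E_{n}(x)$) will bridge the gap — together with the care required at the degenerate value $\gamma=-1$, where $1+c^{-1}=0$ and both numerator and denominator vanish. In the analytic variant, the one genuinely non‑formal point is justifying the contour deformation, i.e.\ locating the poles of $(1-\gamma e^{-z})^{-1}$ and invoking $\alpha\notin\ZZ$; everything else is the residue calculation already carried out above.
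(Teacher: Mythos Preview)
The paper does not prove this theorem; it is simply quoted from \cite[Theorem~0.2]{[Shimura1]} and then applied. So there is no in-paper argument to compare against, and you are in effect supplying what the paper omits.

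Both of your routes --- Abel-summing for $0<|\gamma|<1$ via the twisted difference equation and then passing to the boundary by continuity, and the direct Hankel-contour evaluation --- are standard and sound. The telescoping identity $\sum_{n=0}^{M-1}\gamma^{n}(n+a)^{k-1}=\gamma^{M}R(M)-R(0)$ is correct, and identifying $R$ with a constant multiple of the generalized Euler polynomial is exactly the right move.

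There is one concrete slip, though. You assert that ``$E_{c,k-1}(a)$ vanishes to first order as $c\to-1$, so the apparent singularity at $\gamma=-1$ is removable.'' This is false already for $k=1$: from the recursion $E_{c,n}(t+1)+cE_{c,n}(t)=(1+c)t^{n}$ that the paper itself records, one gets $E_{c,0}\equiv 1$, so the numerator does not vanish and $E_{c,0}(a)/(1+c^{-1})$ genuinely blows up as $c\to-1$. The underlying problem is a typo in the paper's statement: it writes $c=\gamma^{-1}$, yet immediately afterwards says ``when $c=1$ (equivalently $\gamma=-1$)'', which is incompatible with $c=\gamma^{-1}$. With Shimura's intended convention the denominator $1+c^{-1}$ vanishes at $\gamma=1$ --- precisely the point excluded by $\alpha\notin\ZZ$ --- and not at $\gamma=-1$, so no removable-singularity patching is needed there at all. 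A one-line sanity check pins down the correct relation: for $|\gamma|<1$ one has $\zeta(0,a,\gamma)=\sum_{n\ge 0}\gamma^{n}=1/(1-\gamma)$, and matching this against $E_{c,0}(a)/(1+c^{-1})=1/(1+c^{-1})$ forces $c^{-1}=-\gamma$ rather than $c^{-1}=\gamma$. Once that is straightened out your argument goes through cleanly, and the ``main obstacle'' you anticipated --- the normalization of $E_{c,n}$ --- is indeed the only thing that bites.
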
 
When $c=1$ (equivalently $\gamma=-1)$, $E_{1,n}(t)$ is the classical Euler polynomial of degree $n$. Here are some important properties of Euler polynomials that we will use later on (see \cite[Page 25]{[Shimura2]}). 
\begin{equation} \label{Euler1}
E_{c,n}=2^n E_{c,n} \left(\frac{1}{2} \right). 
\end{equation} 
\begin{equation} \label{Euler2}
(d/dt) E_{c,n}(t)=n E_{c, n-1}(t) \quad (n>0).
\end{equation} 
\begin{equation} \label{Euler3}
E_{c,n}(t+1)+cE_{c, n}(t)=(1+c)t^n.
\end{equation} 
We have the following identity  
\begin{equation} \label{Lerch-chi_4}
L(s, \chi_4)=\sum_{n=0}^{\infty} \frac{(-1)^n}{(2n+1)^s}=\frac{1}{2^s} \sum_{n=0}^{\infty} \frac{(-1)^n}{(n+\frac{1}{2})^s} =2^{-s} \zeta(s, \frac{1}{2}, -1). 
\end{equation} 
By Theorem \ref{thm: Lerch} we have 
\[ \zeta(-n, \frac{1}{2}, -1)=\frac{E_{1, n}(\frac{1}{2})}{2} .\] 
By equation \ref{Lerch-chi_4}, we deduce that 
\[ L(-n, \chi_4)=\frac{2^n E_{1,n}(\frac{1}{2})}{2}=\frac{E_{1,n}}{2} .\] 
Let us now give an integral representation of $L(-n, \chi_4)$. From  equation \ref{Euler3}, we can see that 
\[ E_{1,n}(t+2)-E_{1,n}(t)=2((t+1)^n-t^n) .\] 
In particular, $E_{1,n}(2)-E_{1,n}(0)=2$ for all $n$. By equation \ref{Euler2} we have 
\begin{equation*}
\int_{0}^{2} E_{1,n}(x)dx =\left[ \frac{E_{1,n+1}(x)}{n+1} \right]_{0}^{2}=\frac{E_{1, n+1}(2)-E_{n+1}(0)}{n+1}=\frac{2}{n+1}. 
\end{equation*} 
Consequently, we have 
\begin{align*}
-\frac{1}{4} \int_{0}^{2} (E_{1, n}(x)-E_{1,n}-\frac{1}{n+1})dx &=\frac{E_{1,n}}{2}-\frac{1}{4} \left[ \int_{0}^{2} E_{1, n}(x)dx-\frac{2}{n+1} \right] \\
                                                                                           &=\frac{E_{1,n}}{2}= L(-n,\chi_4).
\end{align*} 
In summary, we have just proved the following
\begin{prop}
For all $n \geq 0$ we have 
\[ L(-n, \chi_4)=-\frac{1}{4} \int_{0}^{2} (E_{1, n}(x)-E_{1,n}-\frac{1}{n+1})dx .\] 
\end{prop}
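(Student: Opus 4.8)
The plan is to reduce the asserted integral formula to the evaluation $L(-n,\chi_4)=E_{1,n}/2$ already obtained above, and then to check that the right-hand side equals $E_{1,n}/2$ by a short computation using only the three defining properties \eqref{Euler1}, \eqref{Euler2}, \eqref{Euler3} of the classical Euler polynomials.

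First I would record the target value. Combining \eqref{Lerch-chi_4}, namely $L(s,\chi_4)=2^{-s}\zeta(s,\tfrac12,-1)$, with the special value $\zeta(-n,\tfrac12,-1)=E_{1,n}(\tfrac12)/2$ coming from Theorem~\ref{thm: Lerch}, and with \eqref{Euler1} in the form $2^n E_{1,n}(\tfrac12)=E_{1,n}$, gives $L(-n,\chi_4)=2^n\cdot E_{1,n}(\tfrac12)/2=E_{1,n}/2$. Thus the proposition is equivalent to the clean identity $-\tfrac14\int_0^2\big(E_{1,n}(x)-E_{1,n}-\tfrac1{n+1}\big)\,dx=E_{1,n}/2$, which no longer involves $\chi_4$.

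Next I would compute $\int_0^2 E_{1,n}(x)\,dx$. By \eqref{Euler2} with index $n+1$, the polynomial $E_{1,n+1}(x)/(n+1)$ is an antiderivative of $E_{1,n}(x)$, so $\int_0^2 E_{1,n}(x)\,dx=\big(E_{1,n+1}(2)-E_{1,n+1}(0)\big)/(n+1)$. Writing \eqref{Euler3} with $c=1$ at $t$ and at $t+1$ and subtracting yields $E_{1,m}(t+2)-E_{1,m}(t)=2\big((t+1)^m-t^m\big)$; taking $t=0$ and $m=n+1$ gives $E_{1,n+1}(2)-E_{1,n+1}(0)=2$, hence $\int_0^2 E_{1,n}(x)\,dx=2/(n+1)$. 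Since $E_{1,n}$ and $1/(n+1)$ are constants, $\int_0^2\big(E_{1,n}(x)-E_{1,n}-\tfrac1{n+1}\big)\,dx=\tfrac2{n+1}-2E_{1,n}-\tfrac2{n+1}=-2E_{1,n}$, and multiplying by $-\tfrac14$ produces exactly $E_{1,n}/2=L(-n,\chi_4)$, completing the proof.

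There is no serious obstacle: the whole argument is bookkeeping with \eqref{Euler1}, \eqref{Euler2} and \eqref{Euler3}. Two points deserve momentary care. First, the passage from the one-step recursion \eqref{Euler3} to the two-step difference $E_{1,m}(t+2)-E_{1,m}(t)=2\big((t+1)^m-t^m\big)$ and its evaluation at $t=0$: one must use it with exponent $m=n+1\ge 1$, for which $(0+1)^m-0^m=1$, yielding the constant $2$ needed for the integral; with exponent $0$ the same identity would instead return $0$, which is why the computation should be organized around the antiderivative $E_{1,n+1}$ rather than around $E_{1,n}$ directly. Second, one should keep track of the powers of two in the reduction $L(-n,\chi_4)=E_{1,n}/2$, since an error by a factor of $2$ there would silently corrupt the final constant.
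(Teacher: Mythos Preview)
Your proof is correct and follows essentially the same route as the paper: reduce to $L(-n,\chi_4)=E_{1,n}/2$ via \eqref{Lerch-chi_4}, Theorem~\ref{thm: Lerch} and \eqref{Euler1}, then evaluate $\int_0^2 E_{1,n}(x)\,dx=2/(n+1)$ using \eqref{Euler2} and the two-step difference coming from \eqref{Euler3}. Your added caveat that the difference identity must be applied at index $n+1\ge 1$ (so that $(0+1)^{n+1}-0^{n+1}=1$) is a welcome sharpening of the paper's slightly loose claim that $E_{1,n}(2)-E_{1,n}(0)=2$ ``for all $n$.''
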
 

\appendix
\section{Some formulas and figures}
In this appendix, we provide the formulas of $S_n(x)$ for small $n$ and their graphs on the interval $[0,1]$. First, we have the following formulas for $S_n(x)$ with $1 \leq n \leq 6$.
\[ S_1(x)=\frac{(x-1)x}{2}, S_2(x)=\frac{(x-1)x(2x-1)}{6}, \] 
\[ S_{3}(x)=\frac{(x(x-1))^2}{4}, S_{4}(x)=\frac{1}{6} (x^6-3x^5+ \frac{5}{2} x^4-\frac{1}{2}x^2),\] 
\[ S_5(x)=\frac{1}{6} (x^6-3x^5+ \frac{5}{2} x^4-\frac{1}{2} x^2,S_6=\frac{1}{7}(x^7-\frac{7}{2}x^6+\frac{7}{2} x^5-\frac{7}{6}x^3+\frac{1}{6}x). \] 

We plot the graphs of $S_{n}(x)$ for $n \in \{1,3,5 \}$. 
\begin{center}
\includegraphics[scale=0.4]{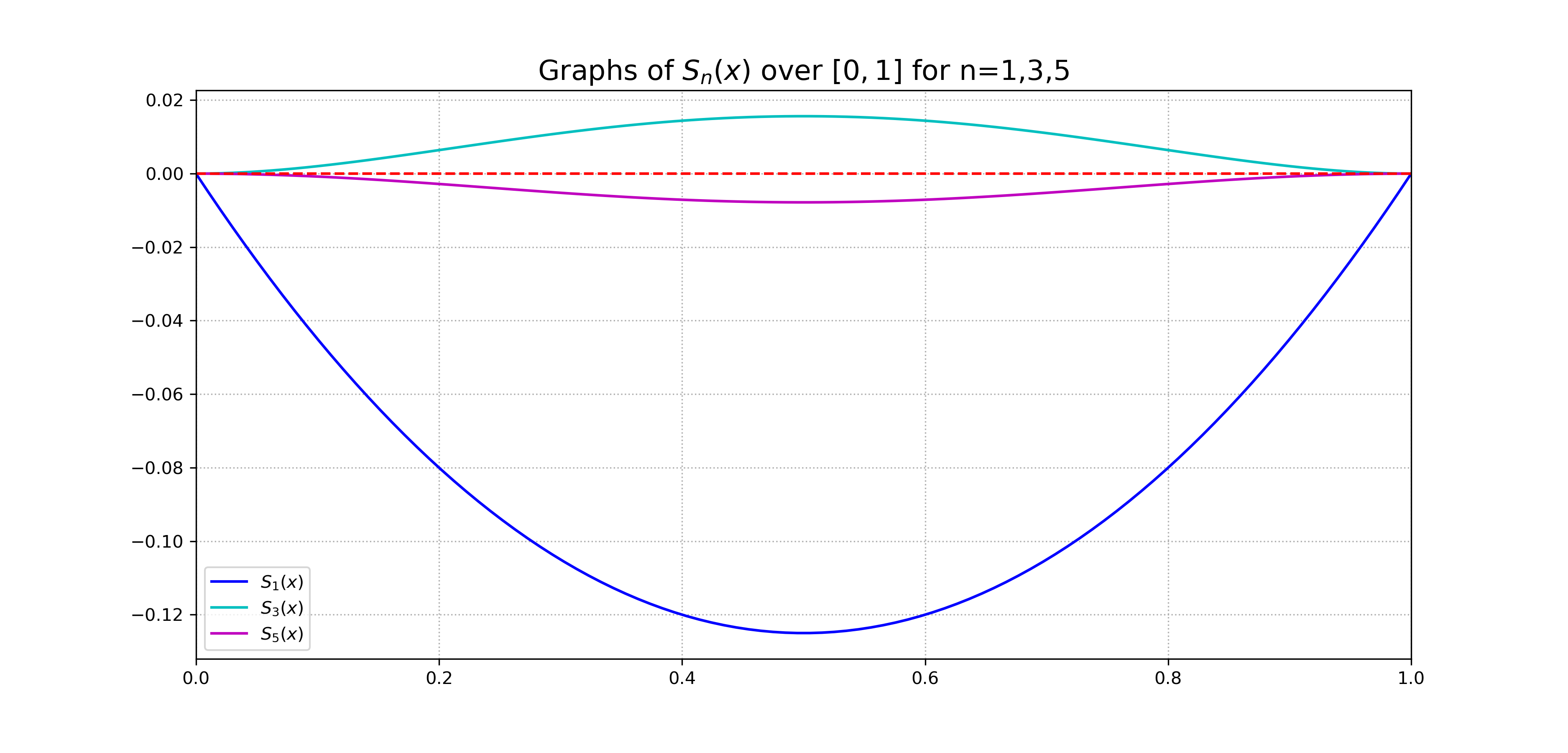} 
\end{center} 
\vspace{0.5 cm}
Here are the graphs of $S_n(x)$ for $n \in \{2,4,6 \}$. 
\begin{center}
\includegraphics[scale=0.4]{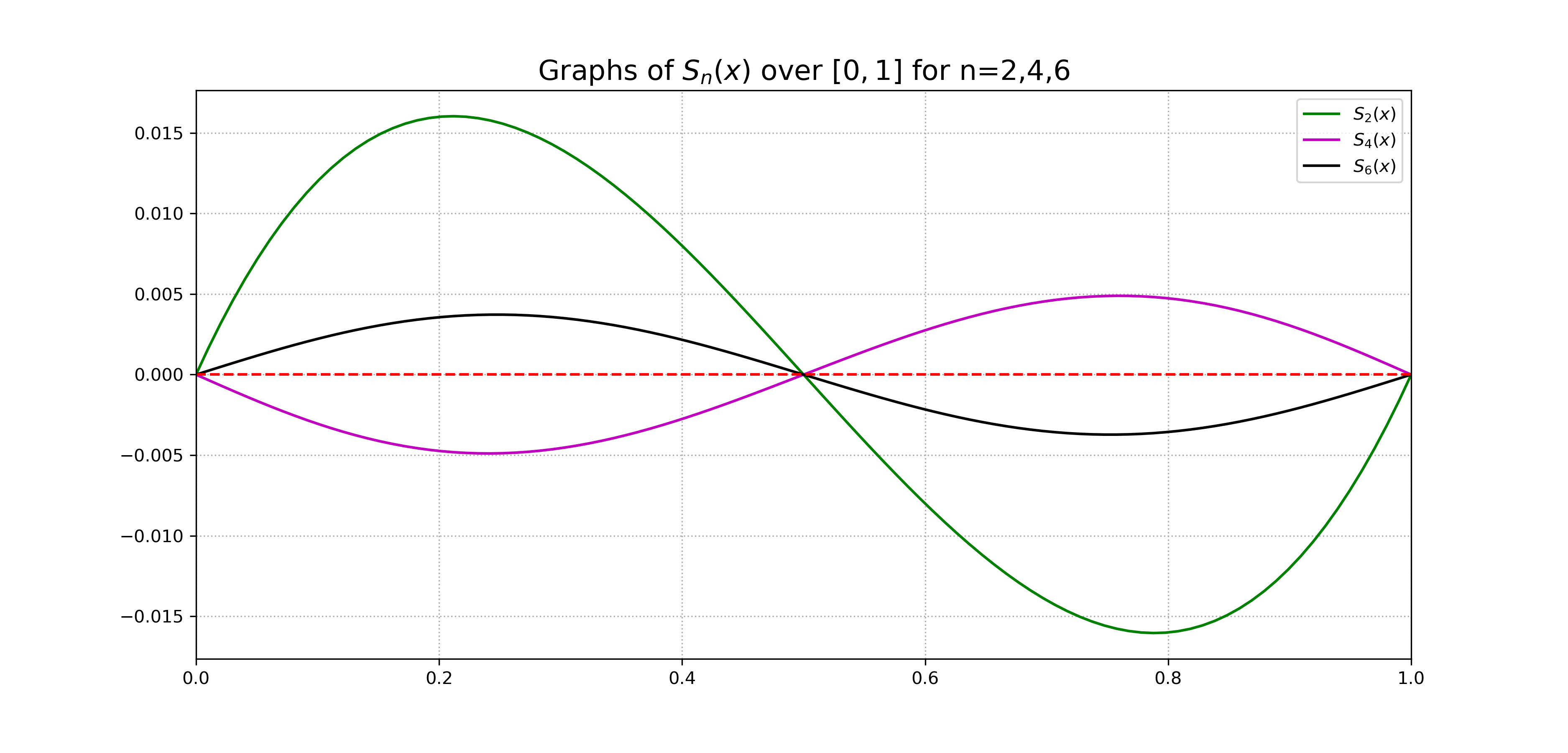} 
\end{center} 

Finally, we plot the region bounded by $S_{n}(x)$ and $y=0$ over $[0,\frac{1}{2}]$.

\hspace*{-1.0in}
\includegraphics[width=20cm]{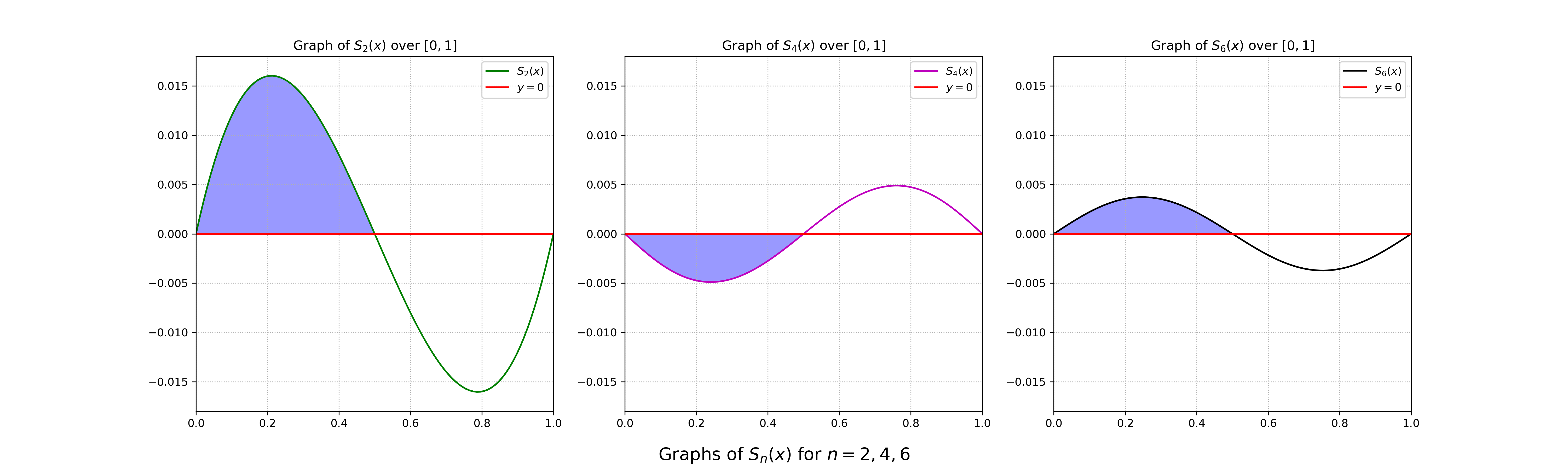}
Next, we compute certain generalized Bernoulli numbers $B_{n,\chi}$ where $\chi$ is a quadratic character of conductor $p$. Concretely, this is the character given by Legendre symbol
$\chi(a)=\left(\frac{a}{p} \right), a \in \ZZ.$ First, we provide a table of $B_{n,\chi}$ for $p \leq 11$. 
\begin{center}
\begin{tabular}{ |c||c|c|c|c|c|c|c|c|c|} 
 \hline
 $n$ & $B_n$ & $B_{n,\chi_{3}}$ & $B_{n, \chi_{4}}$ & $B_{n, \chi_{5}}$ &$B_{n,\chi_{7}}$  &$B_{n, \chi_{11}}$  \\
 \hline
  $0$ & 1      &0		 &0	      &	0	&0	&0							\\
 $1$ & -1/2  &-1/3 	&-1/2     &0	&-1	&-1									 \\ 
$2$ & 1/6    & 0		&0	      &	4/5	&0	&0									\\
$3$ & 0       &2/3	&3/2	      &	0	&48/7 &18	 									\\
$4$ & -1/30 &0		&0	      &	-8	&0	 &0	 							\\
$5$ & 0	   &-10/3	&-25/2   &0	&-160 &-12750/11 										 \\
$6$ &1/42.  &0		&0	       &804/5 &0	   &0		     							 \\ 
$7$ &0 	  &98/3	&427/2   &0		&8176 & 152082 								\\
$8$ &-1/30 &0		&0		&-5776	&0	    &	0.    							\\
$9$ &0	   &-1618/3 &-12465/2 &0		&-5086656/7 & -33743250 									 \\ 
$10$ & 5/66 &0		&0			&1651004/5 & 0   &	 0								\\ 
$11$ & 0 		&40634/3 &555731/2 &0		    &99070928 & 11392546506							\\ 
$12$ &-691/2730 &0  	& 0		&-27622104 &	0 	& 0							 \\ 
\hline
\end{tabular}
\end{center} 
\vspace{0.5cm}
Next, we provide a table of $B_{n,\chi}$ for $ 13 \leq p \leq 23$.

\begin{center}
\begin{tabular}{ |c||c|c|c|c|c|c|c|c|} 
 \hline
 $n$ & $B_{n,\chi_{13}}$ & $B_{n,\chi_{17}}$ & $B_{n, \chi_{19}}$ & $B_{n, \chi_{23}}$     \\
 \hline
  $0$ & 0      &0		 &0	      &	0								\\
 $1$ &0  &0 	&-1     &-3	   								 \\ 
$2$ & 4    & 8		&0	      &	0										\\
$3$ & 0       &0	&66	      &	144									\\
$4$ & -232 &-656		&0	      &	0	 	 							\\
$5$ & 0	   &0	&-13450   &-34080										 \\
$6$ &401556/13  &138984		&0	         &0     							 \\ 
$7$ &0 	  &0	&5303074   &18665136									\\
$8$ &-7482704	 &-958428704/17			&0		&0					\\
$9$ &0	   &0 &-66751985430/19	 &-17895000384		 									 \\ 
$10$ & 2890943420	 &37040430040			&0			&0    						\\ 
$11$ & 0 		&0 & 3539203405562		    &605747775717744/23	 							\\ 
$12$ &-1634752049016	 &-35766492971568	  	& 0		&0 
 						 \\ 
\hline
\end{tabular}
\end{center}

While working on these numerical data, we observed that in our examples, it is always the case that $v_p(B_{\frac{p-1}{2}, \chi})=-1$. Using the work of Ernvall (see \cite{[E]}), we are able to prove a more general statement.

\begin{rmk} 
After  finding a direct proof and searching further the literature, we also found Carlitz's result (see \cite[Theorem 3]{[Carlitz2]}) that implies the above observation. The announcement of results in \cite{[Carlitz2]} is in \cite{[Carlitz1]}. Observe however that there is a misprint in Theorem 1, line 4 in \cite{[Carlitz1]}. The relevant part of \cite[Theorem 3]{[Carlitz2]} states in our special case where values of character are just $1$ or $-1$ the following: ``Let $g$ be a primitive root mod $p$. If the conductor $f$ of the primitive character mod $f$ is a prime number $p$, $p>2$ ,then $\frac{B_n,\chi}{n}$ is integer unless $p$ and $1 - \chi( g)g^n$ are not coprime in which case ..." This statement does imply that $v_p(B_{\frac{p-1}{2},\chi}) =-1$ and also the more general version of our Proposition A.1. In this case (see \cite[Theorem 3]{[Carlitz2]} for the relevant notations) $\nu=0$ and using that $g$ is a primitive root mod $p$ we see that that $\gcd(p, 1-\chi(g)g^n)=\gcd(p, 1+g^n)>1$ iff $n \equiv \frac{p-1}{2} \pmod{(p-1)}$, and therefore we have that
$pB_{\frac{p-1}{2},\chi} \equiv -1 \pmod{p}.$
For the reader's convenience here we provide a direct short proof of this very interesting statement.
\end{rmk} 

\begin{prop}
Let $\chi$ be a quadratic character of conductor $p$ where $p$ is an odd prime number and $n \geq 0$. Then
\begin{enumerate}
\item If $n \not \equiv \frac{p-1}{2} \pmod{(p-1)}$ then $B_{n,\chi} \in \ZZ$.
\item If $n \equiv \frac{p-1}{2} \pmod{(p-1)}$ then $B_{n,\chi}=\frac{a}{p}$ where $a \in \ZZ$ and $a \equiv -1 \pmod{p}$. 
\end{enumerate}
\end{prop}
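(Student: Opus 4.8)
The plan is to work $p$-adically and exploit the classical von Staudt–Clausen type congruences together with the representation $B_{n,\chi} = p^{n-1}\sum_{r=1}^{p-1}\chi(r)B_n(r/p)$. First I would reduce to a generating-function / Kummer-congruence argument: recall that for a Dirichlet character $\chi$ mod $p$ the generalized Bernoulli numbers satisfy $L(1-n,\chi) = -B_{n,\chi}/n$, and that the $p$-adic $L$-function $L_p(s,\chi)$ interpolates these values. However, to keep the argument elementary and self-contained — as the paper's style demands — I would instead use Ernvall's result (cited as \cite{[E]}) directly, which gives exactly the $p$-integrality of $B_{n,\chi}/n$ away from the exceptional residue class and pins down the denominator in the exceptional case. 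So the skeleton is: (i) invoke Ernvall to get that $pB_{n,\chi}$ is a $p$-adic integer always, and that $B_{n,\chi}$ itself is a $p$-adic integer unless $\omega^n = \chi$ on $(\ZZ/p)^\times$ (equivalently $n \equiv \frac{p-1}{2} \pmod{p-1}$, since $\chi$ is the quadratic character and $\omega^{(p-1)/2}$ is the quadratic character); (ii) in the non-exceptional case, combine $p$-integrality at $p$ with the fact that $B_{n,\chi}$ has no primes other than $p$ in its denominator (this follows from the von Staudt–Clausen analogue: any prime $\ell \mid$ denominator of $B_{n,\chi}$ must divide the conductor $p$) to conclude $B_{n,\chi} \in \ZZ$; (iii) in the exceptional case, the same two facts show $B_{n,\chi} = a/p$ with $a \in \ZZ$, and the remaining task is to compute $a \bmod p$, i.e. to show $pB_{n,\chi} \equiv -1 \pmod p$.

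For step (iii), the computation $pB_{n,\chi} \equiv -1 \pmod p$ is the heart of the matter. Here I would use the congruence $pB_{n,\chi} \equiv \sum_{r=1}^{p-1}\chi(r) r^{n} \pmod p$, valid when $n \equiv \frac{p-1}{2} \pmod{p-1}$, obtained by expanding $B_n(r/p) = \sum_{j}\binom{n}{j}B_j (r/p)^{n-j}$, multiplying by $p^{n-1}$, and noting that all terms with $j \geq 2$ contribute $p$-integrally (and in fact $\equiv 0$ mod $p$ after multiplication by $p$), while the $j=0$ term gives $p^{n-1}\sum \chi(r)(r/p)^n = p^{-1}\sum\chi(r)r^n$ and the $j=1$ term gives $-\tfrac12 p^{n-1}n\sum\chi(r)(r/p)^{n-1}$, which vanishes mod $p$ because $\sum_{r}\chi(r)r^{n-1} \equiv \sum_r \chi(r)\omega(r)^{n-1} \equiv \sum_r \chi(r)\cdot(\text{nontrivial char}) = 0$. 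Then, since $n \equiv \frac{p-1}{2}$, Fermat gives $r^n \equiv r^{(p-1)/2} \equiv \chi(r) \pmod p$, so $\sum_{r=1}^{p-1}\chi(r)r^n \equiv \sum_{r=1}^{p-1}\chi(r)^2 = \sum_{r=1}^{p-1} 1 = p-1 \equiv -1 \pmod p$. This yields $pB_{n,\chi} \equiv -1$, hence $a \equiv -1 \pmod p$, as claimed.

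The main obstacle, as I see it, is justifying cleanly the two "denominator" inputs without simply citing a heavy machine: namely that the only possible prime in the denominator of $B_{n,\chi}$ is $p$ itself, and that after clearing a single factor of $p$ one lands in $\ZZ_p$. Ernvall's theorem (and, as the remark notes, Carlitz's \cite[Theorem 3]{[Carlitz2]}) supplies precisely this, so I would lean on that citation for the integrality statements and reserve the direct elementary argument for the mod-$p$ evaluation in part (2). A secondary technical point is verifying that in part (1) the expansion of $p^{n-1}B_n(r/p)$ is genuinely $p$-integral when $n \not\equiv \frac{p-1}{2}$: the $j=0$ term is $p^{-1}\sum_r\chi(r)r^n$, and one must check $\sum_r \chi(r)r^n \equiv 0 \pmod p$, which holds because $r^n \equiv \omega(r)^n$ and $\chi\omega^n$ is a nontrivial character of $(\ZZ/p)^\times$ exactly when $n\not\equiv\frac{p-1}{2}\pmod{p-1}$; this character-sum vanishing is the clean dividing line between the two cases and I would present it first, as it simultaneously organizes both parts of the proof.
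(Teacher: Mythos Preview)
Your proposal is correct and follows essentially the same architecture as the paper's proof: both cite Ernvall's thesis for the integrality inputs (that $pB_{n,\chi}\in\ZZ_p$ and that $B_{n,\chi}$ is $q$-integral for $q\neq p$), and both finish by reducing to the evaluation of $\sum_{r=1}^{p-1}\chi(r)r^n \pmod p$ via Euler's criterion $\chi(r)\equiv r^{(p-1)/2}$ together with the standard power-sum lemma. The one substantive difference is in how that congruence is reached: the paper quotes Ernvall's Theorem~1.6, which gives $p^2 B_{n,\chi}\equiv \sum_{a=1}^{p^2}\chi(a)a^n \pmod{p^2}$, and then reduces the sum over $[1,p^2]$ to $p\sum_{a=1}^{p}\chi(a)a^n$ by expanding $(kp+a)^n$ modulo $p^2$; you instead expand $B_{n,\chi}=p^{n-1}\sum_r\chi(r)B_n(r/p)$ directly via $B_n(x)=\sum_j\binom{n}{j}B_jx^{n-j}$ and use von Staudt--Clausen to kill the $j\geq 2$ terms in $pB_{n,\chi}$ modulo $p$. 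Your route is marginally more self-contained (it avoids one further citation to Ernvall) and makes the dichotomy between the two cases visible already at the $j=0$ term, exactly as you note in your final paragraph. A small cosmetic point: your justification that the $j=1$ term vanishes mod $p$ via a character-sum argument is correct but unnecessary, since the explicit factor of $p$ already does the job.
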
 
\begin{proof}
By \cite[Theorem 1.2]{[E]}, we know that if $n \not \equiv \delta_{\chi} \pmod{2}$ then $B_{n, \chi}=0$. Furthermore, we remark that for all odd primes $p$
\[ \frac{p-1}{2} \equiv \delta_{\chi} \pmod{2}.\]
Therefore, it is is sufficient to prove this proposition when $n \equiv \delta_{\chi} \pmod{2}$.

By \cite[Theorem 1.4]{[E]}, we know that $pB_{n,\chi}$ is $p$-integral. Additionally, by \cite[Theorem 1.5]{[E]}, $B_{n,\chi}$ is $q$-integral for all primes $q \neq p$. Combining these two facts, we can conclude that $pB_{n, \chi} \in \ZZ$ for all $n \geq 0$. Furthermore, by \cite[Theorem 1.6]{[E]}, we know that 
\[ p^2 B_{n,\chi} \equiv \sum_{a=1}^{p^2} \chi(a) a^{n} \pmod{p^2} .\] 

We have the following congruences modulo $p^2$ 
\begin{align*}
\sum_{a=1}^{p^2} \chi(a) a^{n} &=\sum_{a=1}^{p} \left (\sum_{k=0}^{p-1} \chi(kp+a)(kp+a)^{n} \right) \\
                                                                   &=\sum_{a=1}^{p} \chi(a) \sum_{k=0}^{p-1} \left(a^{n}+n (kp) a^{n-1} \right)\\
                                                                   &=\sum_{a=1}^{p} \chi(a) \left(pa^{n}+np a^{n-1} \sum_{k=1}^{p-1} k \right) \\
                                                                   &=p \sum_{a=1}^{p} \chi(a)a^{n}+\frac{np^2(p-1)}{2} \sum_{a=1}^{p} \chi(a) a^{n-1} \\
                                                                   &=p \sum_{1}^{p} \chi(a) a^{n}\pmod{p^2}.
\end{align*} 
The second congruence comes from the expansion of $(a+kp)^{n}$ leaving out terms which are divisible by $p^2$. The last congruence comes from the identity
\[ \sum_{k=1}^{p-1} k=\frac{p(p-1)}{2} .\] 

We have the following well-known simple lemma.
\begin{lem}
Let $r$ be a natural number. Then 
\[ \sum_{a=1}^{p} a^{r} = \begin{cases} 0 &\mbox{if } p-1 \nmid r  \\
-1 & \mbox{if } p-1|r  \end{cases} .\] 
\end{lem} 
Using this lemma and the fact that $\chi(a) \equiv a^{\frac{p-1}{2}} \pmod{p}$ we have the following congruences modulo $p$ 
\[ \sum_{a=1}^{p} \chi(a) a^{n} \equiv \sum_{a=1}^{p} a^{n+\frac{p-1}{2}} \equiv \begin{cases} -1 &\mbox{if } n \equiv \frac{p-1}{2} \pmod{(p-1)}  \\
0 & \mbox{else. } \end{cases}\] 

Consequently, we have the following congruences modulo $p^2$ 
\[ p^2 B_{n,\chi} \equiv \begin{cases} -p &\mbox{if } n \equiv \frac{p-1}{2} \pmod{(p-1)}  \\
0 & \mbox{else. } \end{cases}\] 
Combining this congruence and the fact that $pB_{n,\chi} \in \ZZ$, the proposition follows easily. 
\end{proof}

\begin{acknowledgement}
Over the years after publication of the short note \cite{[Minac]}, Min\'a\v{c} received a number of encouraging correspondences and discussions concerning further generalizations and explorations of possible values of other zeta functions.  We are extremely grateful for our correspondents including but not limited to S. Chebolu, R. Dwilewicz, G. Everest, E. Frenkel, A. Granville, J. Merzel, L. Muller, \v{S}. Porubsk\'y, P. Ribenboim, Ch. Rottger, A. Schultz, M.Z Spivey, B. Sury. Further, we are are grateful to K. Dilcher and R. Ernvall for helping us to obtain some related references including the very nice R. Ernvall's thesis \cite{[E]}.
We are also grateful to an anonymous referee for his/her careful reading of the manusript, and for providing us with helpful comments and valuable suggestions. 
 Last but not least, we are grateful to Leslie Hallock for her  careful proofread. 
\end{acknowledgement}

\end{document}